\DeclareMathOperator*{\argmax}{argmax}
\definecolor{Gray}{gray}{0.9}
\DeclareFontFamily{U}{rcjhbltx}{}
\DeclareFontShape{U}{rcjhbltx}{m}{n}{<->rcjhbltx}{}
\DeclareSymbolFont{hebrewletters}{U}{rcjhbltx}{m}{n}
\newcommand\barbelow[1]{\stackunder[1.2pt]{$#1$}{\rule{.8ex}{.075ex}}}
\newtheorem{prop}{Proposition}
\theoremstyle{definition}
\newtheorem{definition}{Definition}
\newtheorem*{definition*}{Definition}
\algrenewcommand\algorithmicforall{\textbf{foreach}}
\algrenewcommand\algorithmicindent{.8em}
\journal{Transportation Research Part C: Emerging Technologies}
\begin{document}

\begin{frontmatter}


\title{Dissolving the Segmentation of a Shared Mobility Market: \\A Framework and Four  Market Structure Designs}



\author[mitcee]{Xiaotong Guo}
\author[mitcee]{Ao Qu}
\author[pkusg]{Hongmou Zhang\corref{cor}}
\ead{zhanghongmou@pku.edu.cn}
\author[mitdusp]{Peyman Noursalehi}
\author[mitdusp]{Jinhua Zhao}

\address[mitcee]{Department of Civil and Environmental Engineering, Massachusetts Institute of Technology, Cambridge, MA 02139, United States}
\address[pkusg]{School of Government, Peking University, Beijing 100871, China}
\address[mitdusp]{Department of Urban Studies and Planning, Massachusetts Institute of Technology, Cambridge, MA 02139, United States}
\cortext[cor]{Corresponding author}

\begin{abstract}
In the governance of the shared mobility market of a city or of a metropolitan area, there are two conflicting principles: 1) the healthy competition between multiple platforms, such as between Uber and Lyft in the United States, and 2) economies of network scale, which leads to higher chances for trips to be matched, and thus higher operation efficiency, but which also implies monopoly. The current shared mobility markets, as observed in different cities in the world, are either monopolistic, or largely segmented by multiple platforms, the latter with significant efficiency loss. How to keep the competition between platforms, but to reduce the efficiency loss due to segmentation with new market designs is the focus of this paper. We first propose a theoretical framework of shared mobility market segmentation and then propose four market structure designs thereupon. The framework and four designs are first discussed as an abstract model, without losing generality, thus not constrained to any specific city. High-level perspectives and detailed mechanisms for each proposed market structure are both examined. Then, to assess the real-world performance of these market structure designs, we used a ride-sharing simulator with real-world ride-hailing trip data from New York City to simulate. The proposed market designs can reduce the total vehicle-miles traveled (VMT) by 6\% while serving more customers with 8.4\% fewer total number of trips. In the meantime, customers receive better services with on-average 5.4\% shorter waiting time. At the end of the paper, the feasibility of implementation for each proposed market structure is discussed.
\end{abstract}

\begin{keyword}
Ride-Sharing \sep Shared Mobility \sep Market Segmentation \sep Market Structure \sep Mechanism Design.
\end{keyword}

\end{frontmatter}


\section{Introduction}

The Transportation Network Companies (TNCs), such as Uber and Lyft, are accountable for an additional 5.7 billion vehicle miles travelled (VMT) annually in just nine US cities~\cite{Schaller2018}. Although TNCs improve the convenience of travelers by allowing more travel options, their proliferation has in the meantime resulted in a marked increase in urban vehicle travel, leading to urban congestion, delays, and higher carbon emissions. While acknowledging the benefits of TNCs, such as enhanced convenience and accessibility of transportation services for urban residents, policymakers must also actively improve the efficiency of TNC markets, i.e., moving more people with fewer vehicles. This paper focuses on redesigning current shared mobility market structures to enhance market efficiency. The term "platforms" is used to refer to TNCs, as per the terminology in the platform economy literature~\cite{Thelen_2018}.

The ``market segmentation'' in the ride-hailing market indicates that various competing platforms provide services to customers in the same area without coordination. This segmentation leads to lower customer and driver concentration on each platform, increasing the distances drivers must travel to pick up passengers. When a shared mobility platform handles more trips, it increases the likelihood of matching those trips with nearby drivers and pooling multiple trips together, resulting in more efficient use of vehicles and reduced congestion and emissions~\cite{Santi2014, Frechette2019}. Recent studies have analyzed the costs of market segmentation, also known as platform non-coordination~\cite{Sejourne_2018, ZhangHongmou2022EaDo, Kondor2022, wang2023quantifying}. \citet{Kondor2022} found that adding an additional ride-hailing platform could increase the number of vehicles by up to 67\% when platforms are not coordinated. The costs of market segmentation can be significant and ultimately lead to increased congestion and carbon emissions in cities. 

The segmentation of the market creates a crucial tension between two fundamental economic principles: fostering healthy market competition and achieving economies of scale. The challenge is finding a balance between promoting competition and encouraging the consolidation necessary to achieve these economies of scale. Instead of promoting a monopolistic market, the aim of this paper is to enhance the efficiency of the existing competitive but segmented market by encouraging \emph{cooperation} between platforms in a manner that decreases VMT and improves services for travelers. Cities around the globe have begun to recognize the advantages of cooperation among mobility service providers. For instance, the city of Z\"urich is investigating ways to improve collaboration among transport providers~\cite{Gansterer_Hartl_2018}. In the freight transportation sector, cooperation between multiple carriers is often achieved by reassigning transportation requests among them to reduce their total transportation costs or to maximize their profit from serving that requests, with potential savings ranging from 8\% to 60\%~\cite{Verdonck_2013, Sanchez_2016, Soysal_2018}. 

This paper focuses on resolving the fragmentation of shared mobility markets while maintaining healthy competition by proposing four possible market structures. The major contributions of this paper are:

\begin{itemize}
    \item Introduce a comprehensive theoretical framework for describing the shared mobility market structure and present four potential market structures to decrease the cost associated with market fragmentation.
    \item Describe two existing and four proposed shared mobility market structures from demand and supply information, service delivery, and payment flows perspectives.
    \item Design specific mechanisms for each proposed market structure.
    \item Evaluate the performance of each proposed market structure and its mechanism using real-world ride-hailing simulation based on New York City (NYC). The proposed market structures can reduce market friction by serving more customer requests with 6\% less total VMT, 8.4\% fewer total number of  trips, and 5.4\% reduced customer wait times.
\end{itemize}

It is worth noting that the fragmentation of the market in the ride-hailing industry can be partially alleviated through the actions of ``multi-homing'' drivers and customers who use multiple platforms. However, this passive form of collaboration only leads to marginal improvements in efficiency. As of May 2020, nearly 9\% of customers in the US were multi-homing customers~\cite{Uber-Lyft}. Although this helps to ``dissolve'' some of the segmentation between platforms, it cannot fundamentally change the market structure.

The structure of the article is as follows. In Section 2, we conduct a review of related literature on shared mobility markets and mechanism designs. In Section 3, we present a unified framework to describe shared mobility markets and a brief overview of possible market structures. Detailed market mechanisms are specified in Section 4. In Section 5, we present the results of numerical experiments conducted to evaluate the proposed market structures and mechanisms. Lastly, in Section 6, we conclude the paper by discussing the feasibility of each proposed market, identifying limitations, and outlining potential directions for future research.

\section{Literature Review} \label{sec:lit_review}

There is a vast body of research on shared mobility markets. This section will review the literature in three main areas: i) models and analyses of shared mobility markets, ii) shared mobility market structures and mechanisms, iii) and general market structures and mechanisms.

\subsection{Models and Analyses in Shared Mobility Markets}

\citet{Wang_Yang_2019} proposed a general framework and conducted a comprehensive literature review on shared mobility markets. There are two main perspectives of research in shared mobility markets: i) understanding demand and supply, and ii) platform operations. 

Understanding and predicting demand patterns are critical to platforms' operations. Studies have found that ride-hailing users are young, wealthy, and located in higher-density urban areas~\cite{Dias2017, Young2019}. \citet{Lavieri2019} described a comprehensive analysis of ride-hailing travel behavior, leading to policy implications about pooling acceptance and relationships with other modes.
On the other hand, state-of-the-art deep learning frameworks are applied to predict short-term passenger demand in the shared mobility system~\cite{Ke2017, Ke2019}.

Meanwhile, supply-side information and drivers' behaviors need to be understood to better operate shared mobility systems. 
\citet{Hall2018} utilized the survey and administrative data to provide a comprehensive analysis of the labor market with Uber drivers. \citet{Xu2020} discussed the supply curve of the ride-hailing system considering finite matching radius, which is built upon the ``Wild Goose Chase'' effect identified by \citet{Castillo2022}.

With the understanding of demand and supply in the shared mobility market, platforms need to design operation strategies to better serve customers and utilize driver supply. 
The operation strategies typically include dynamic pricing~\cite{LEI201977, Ban2021, Maljkovic2022, Banerjee2022, LIU2023103960}, customer-driver matching~\cite{Alonso-Mora2017, Bertsimas2019, DAGANZO2019213, BONGIOVANNI2022102835}, and vehicle rebalancing~\cite{Wen2017, Tsao2019, GUO2021, Guo2022}. 

\subsection{Shared Mobility Market Structures and Mechanisms}

Most studies have focused on the competition between two-sided platforms in the shared mobility market as pointed out by \citet{Wang_Yang_2019}. \citet{Zha_Yin_Yang_2016} analyzed the shared mobility market with an aggregated model focusing on customer-driver matching. Competition between multiple ride-sourcing platforms is investigated and their results suggested that the regulatory agency should encourage the merger of competing platforms if the matching friction between customers and drivers was sufficiently large. \citet{Cohen_Zhang} discussed the competition between two-sided platforms with a Multinomial Logit (MNL) behavior choice model and illustrated the existence and uniqueness of equilibrium of the competing market. They further proved that the ``coopetition,'' which is synonymic to the concept of cooperation in our paper, leads to a win-win situation where platforms, drivers, and customers benefit from the partnerships.

While cooperation between competing platforms brings additional benefits to the shared mobility system, few papers have explored \emph{competition with collaboration} in shared mobility markets under different market conditions and regulations. \citet{Shaheen_Cohen_2020} highlighted the business models and partnerships as one of the core enablers for facilitating the MOD system. However, 
how to redesign the shared mobility market structure to facilitate collaboration between competing platforms, establish feasible market mechanisms to allow such collaboration, and impose appropriate regulations still remains an open question.

\subsection{General Market Structures and Mechanisms}

Outside the shared mobility field, market structures that enable collaboration between competitors have been studied.
\citet{Verdonck_2013} conducted a survey on horizontal cooperation in logistics, leading to improvements in companies' productivity and level of service. Horizontal cooperation refers to sharing customer orders (demand side) or vehicle capacities (supply side). Multiple operation-level techniques are introduced to facilitate horizontal cooperation, including auction-based mechanisms and bilateral swapping. In forestry transportation, \citet{FRISK2010} proposed a collaboration mechanism for cost allocation and demonstrated that the proposed mechanism leads to an additional 9\% saving to each company while better planning strategies could only save around 5\%. \citet{Kotzab_Teller_2003} focused the coopetition in the European grocery industry. They showed that collaboration and competition can be performed simultaneously even under a competition intense scenario, and the coopetition offered companies with better management solutions and promoted economies of scale.

In this paper, our goal is to improve the efficiency of shared mobility markets by utilizing theories and experiences from other fields to design market structures that promote collaboration among shared mobility platforms. To the best of the authors' knowledge, this is the first paper to analyze the shared mobility market from a systematic perspective and offer potential market structures to dissolve the market segmentation. In the following sections, we will present a comprehensive theoretical framework for understanding the shared mobility market, as well as four potential market structures and mechanisms that take into account various levels of collaboration between platforms.

\section{Market Representation}

In this section, we will present a unified framework for describing shared mobility market structures and present market representation from information collection, service delivery, and payment flow perspectives. A summary of proposed market structures is presented at the end of the section as well.

\subsection{Preliminary}

For the operations of ride-hailing platforms, a methodology framework for operating dynamic shared mobility platforms with high-capacity vehicles is proposed by \citet{Alonso-Mora2017}. Given a set of requests $\mathcal{R}$ and available vehicles $\mathcal{V}$ in a decision time interval of length $\Delta$, a pairwise shareability network, namely RV-graph, is constructed, indicating the possibility for any two requests to share the same trip and for any vehicle to pick up any request. Using the RV-graph as a baseline, a set of candidate trips $\mathcal{T}$ is enumerated and an RTV-graph is formulated.
An Integer Linear Program (ILP) is then solved based on the RTV-graph to generate the optimal request-trip-vehicle assignment. 
Based on the optimal assignment, vehicle routes were generated and the vacant vehicles are repositioned. 

Let $\boldsymbol{m}$ indicate the optimal assignment between requests, trips, and vehicles. The ILP for computing the optimal assignment is denoted as $\boldsymbol{f}(\boldsymbol{\Sigma}, G)$, where $\boldsymbol{\Sigma}$ stands for the objective of the assignment problem and $G = (V, E)$ is the RTV-graph. Therefore, the proposed method for solving the assignment problem can be represented as $\boldsymbol{m} = \boldsymbol{f}(\boldsymbol{\Sigma}, G)$. The objective $\boldsymbol{\Sigma}$ can be generalized to any possible objectives related to vehicle-request and request-request matchings. For instance, \citet{Alonso-Mora2017} found an assignment that minimized the sum of delays over all assigned requests and penalties for unsatisfied requests.

Additionally, we introduce the following basic definition from the graph theory to help construct the unified theoretical framework~\cite{graph_theory}.

\begin{definition}[Subgraph]
Given a graph $G=(V, E)$ with vertex set $V$ and edge set $E$, a \emph{subgraph} $G' = (V', E')$ of a graph $G$ is a graph $G'$ whose vertex set and edge set are subsets of those of $G$, i.e., $V' \subseteq V$, $E' \subseteq E$.
\end{definition}

\subsection{A Unified Framework}

The essence of a shared mobility market is a set of rules to conduct an assignment between drivers and customers. Therefore, there are three layers of abstraction in this formulation process: 1) the underlying supply and demand, 2) the actual assignment between them, or what we call ``matching'' in all prior papers~\cite{Santi_Resta_Szell_Sobolevsky_Strogatz_Ratti_2014, Vazifeh_Santi_Resta_Strogatz_Ratti_2018}, and 3) the ``market structure'' or ``market mechanism,'' which rules how this assignment is going to be after supply and demand are realized. Typically, this rule set is driven by an objective, e.g., profit maximization, customers' travel time minimization, or VMT minimization, and is constrained by a list of factors, e.g., market segmentation, and how the segmentation could be ``dissolved'' to different extents.

More formally, let's denote a bipartite set $\mathcal{Q}:= \mathcal{R} \bigcup \mathcal{V}$ in a shared mobility market, where $\mathcal{R}$ represents customers (who request trips from origins to destinations) and $\mathcal{V}$ corresponds to drivers.
Given the underlying feasibility constraints for a ``matching'' between customers and drivers, and between customers\footnote{The typical feasibility constraints include the maximum wait time, the maximum delay time, and vehicle capacity constraints in the dynamic ride-hailing system.}, a set of candidate trips $\mathcal{T}$ and the corresponding RTV-graph $G_\mathcal{Q} = (E_\mathcal{Q}, V_\mathcal{Q})$ can be constructed. 
After specifying certain objective $\boldsymbol{\Sigma}$, the optimal assignment $\boldsymbol{m}$ can be found by solving the ILP based on the RTV-graph $G_\mathcal{Q}$, i.e., $\boldsymbol{m} = \boldsymbol{f}(\boldsymbol{\Sigma}, G_\mathcal{Q})$.

In addition to the underlying supply, demand and feasibility constraints, a market structure $\mu$ imposes extra constraints over the RTV-graph $G_\mathcal{Q}$. 
A specific market structure $\mu$ leads to a modified RTV-graph $G_\mathcal{Q}^\mu = (E_\mathcal{Q}^\mu, V_\mathcal{Q}^\mu)$ which is a \emph{subgraph} of the RTV-graph $G_{\mathcal{Q}}$, i.e., $E_\mathcal{Q}^\mu \subseteq E_\mathcal{Q}$, $V_\mathcal{Q}^\mu \subseteq V_\mathcal{Q}$.
Since the feasibility constraint for constructing the original RTV-graph is independent of market structures, considering market structures leads to subgraphs of the RTV-graph $G_{\mathcal{Q}}$.
For example, in a segmented market, the driver set $\mathcal{V}$ is partitioned into several non-overlapping subsets $\mathcal{V} = \mathcal{V}_1\bigcup\cdots \bigcup\mathcal{V}_n$, and \emph{vice versa} for customers. 
Only trips that include customers from the same subset $\mathcal{R}_i, \forall i,$ remain in the candidate trip set $\mathcal{T}$, hence it can be partitioned into several non-overlapping subsets $\mathcal{T} = \mathcal{T}_1\bigcup\cdots \bigcup\mathcal{T}_n$.
Edges between trips and drivers are retained only if they share the same index (under the same platform).
The segmented market leads to a modified RTV-graph $G_\mathcal{Q}^\mu$ described above and it can be further used to solve the optimal assignment problem, i.e., calculating $\boldsymbol{m} = \boldsymbol{f}(\boldsymbol{\Sigma}, G^\mu_\mathcal{Q})$.

\subsection{Basic Assumptions}

In microeconomics, a market is defined as a collection of buyers and sellers that, through their actual or potential interactions, determine the price of a product or set of products~\cite{Pindyck_Rubinfeld_2013}.
A shared mobility market can be treated as two separate markets linked by platforms: a market with drivers being service sellers and platforms being service buyers; a market with customers being service buyers and platforms being service sellers.
Let $\mathcal{P}$ denote the set of platforms and $|\mathcal{P}| > 1$ represents a segmented shared mobility market.
In this paper, without the loss of generality, we consider the competing market as the two-platform scenario, i.e., $\mathcal{Q} = (\mathcal{R}_1 \bigcup \mathcal{V}_1) \cup (\mathcal{R}_2 \bigcup \mathcal{V}_2)$.
Expanding the formulation to include multiple competing platforms in the market is a straightforward process.
A single-platform market is indicated by $\mathcal{Q} = \mathcal{R} \bigcup \mathcal{V}$. Assume all customers $\mathcal{R}$ need to be served by one platform in the platform set $\mathcal{P}$. First, we make the following assumptions and definitions for the shared mobility market studied in this paper:
\begin{enumerate}
    \item The shared mobility market is an open market where customers could leave the market if the price is unacceptable.
    \item A homogeneous fleet of vehicles with a capacity of 4 operated by a set of drivers. 
    \item Each driver contracts with at most one platform and each customer requests from only one platform (no multi-homing customers and drivers).
    \item The pricing scheme for a platform $i \in \mathcal{P}$ can be represented by $(\boldsymbol{p}_i, \boldsymbol{q}_i, \boldsymbol{o}_i)$. 
    Platform $i \in \mathcal{P}$ charges a customer with an OD-pair $(s,t)$ the price $\boldsymbol{p}_{i}(d_{st}, \tau_{st})$ for a dedicated trip and $\boldsymbol{q}_{i}(d_{st}, \tau_{st})$ for a shared trip, where $d_{st}$ and $\tau_{st}$ are the shortest path distance and travel time (calculate with constant vehicle speed $\Bar{v}$) between the origin $s$ and the destination $t$, respectively.
    For the driver who serves a trip that consists of either a single customer or multiple customers, the platform $i \in \mathcal{P}$ pays $\boldsymbol{o}_i(\hat{d}, \hat{\tau})$ to the driver, where $\hat{d}$ and $\hat{\tau}$ represents distance and time for both non-occupied (pick-up) and occupied trip, respectively.
\end{enumerate}

Under the assumption of pricing scheme $(\boldsymbol{p}, \boldsymbol{q}, \boldsymbol{o})$ and constant vehicle speed $\Bar{v}$, the optimal assignment $\boldsymbol{m}$ with minimum VMT provides each platform with the largest revenue. For all markets throughout the paper, the objective function $\Sigma$ equipped with market structures $\mu$ is the VMT minimization, and $C(\boldsymbol{m})$ denotes the overall VMT for an optimal assignment $\boldsymbol{m} = \boldsymbol{f}(\boldsymbol{\Sigma}, G^\mu_Q)$.

\subsection{Status Quo Market}

In this subsection, we describe two existing \emph{status quo} shared mobility markets. 
While the shared mobility platforms provide massive convenience to travelers, limited interventions and regulations are imposed by governmental authorities~\cite{Edelman_Geradin_2015}.
There are two types of \emph{status quo} markets: single-platform market and multi-platform market.
A typical single-platform market is the Chinese ride-hailing market, where DiDi served 93\% of total daily active users in 2019 \cite{Didi}. 
As for the multi-platform market, the American market with Uber and Lyft competing for the market share is an emblematic one.
The latest data shows that Uber served 71\% of the market share nationwide while Lyft served the remaining 29\% for April 2020 \cite{Uber-Lyft}. 
In certain cities and neighborhoods, the gap may be even smaller (Detroit has a nearly 50/50 market share between Uber and Lyft).

There are three components in the shared mobility market: Driver, Customer, and Platform. To describe the structure of each shared mobility market, we introduce four flows between these components:
\begin{itemize}
    \item \textbf{\color{red} Demand information flow}: Flow of customer request information.
    \item \textbf{\color{blue} Supply information flow}: Flow of driver location and occupancy information.
    \item \textbf{\color{orange} Payment flow}: Flow of money paid by customers to use shared mobility services. 
    \item \textbf{Physical service delivery flow}: Flow of physical service delivery from drivers to customers. 
\end{itemize}

Physical service delivery flow is trivial to discuss since it always directly runs from drivers to customers in any shared mobility market.
Thus, in the following discussions, we only focus on {\color{red}demand information}, {\color{blue}supply information} and {\color{orange}payment flow}, which are denoted by {\color{red}red}, {\color{blue}blue}, and {\color{orange}orange} arrows, respectively, in the following figures.
Figure \ref{fig:status_quo_market} illustrates the \emph{status quo} markets, which are straightforward.
In both single-platform and multi-platform \emph{status quo} markets, platforms collect demand information from customers and supply information from drivers, and allocate drivers to customers. 
The payment flow goes from customers to drivers via platforms.

\begin{figure}[!h]
\begin{subfigure}{.5\textwidth}
  \centering
  \includegraphics[width=.95\linewidth]{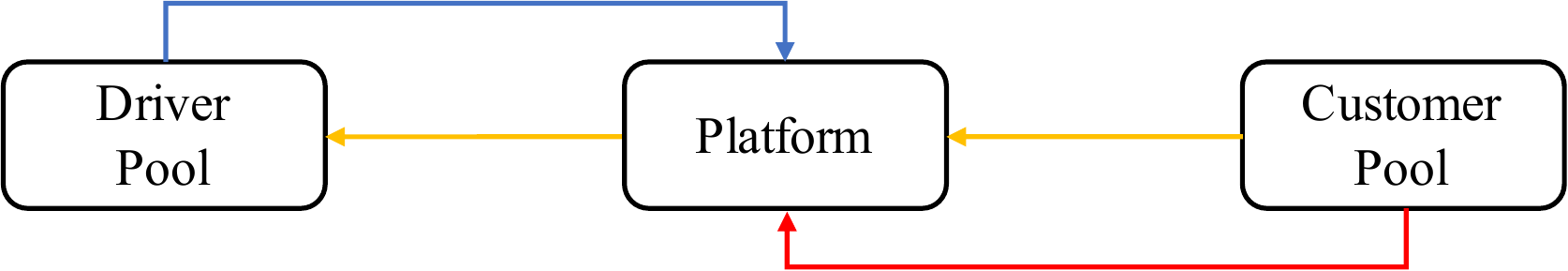}  
  \caption{single-platform market}
  \label{fig:status_quo_market_1}
\end{subfigure}
\begin{subfigure}{.5\textwidth}
  \centering
  \includegraphics[width=.95\linewidth]{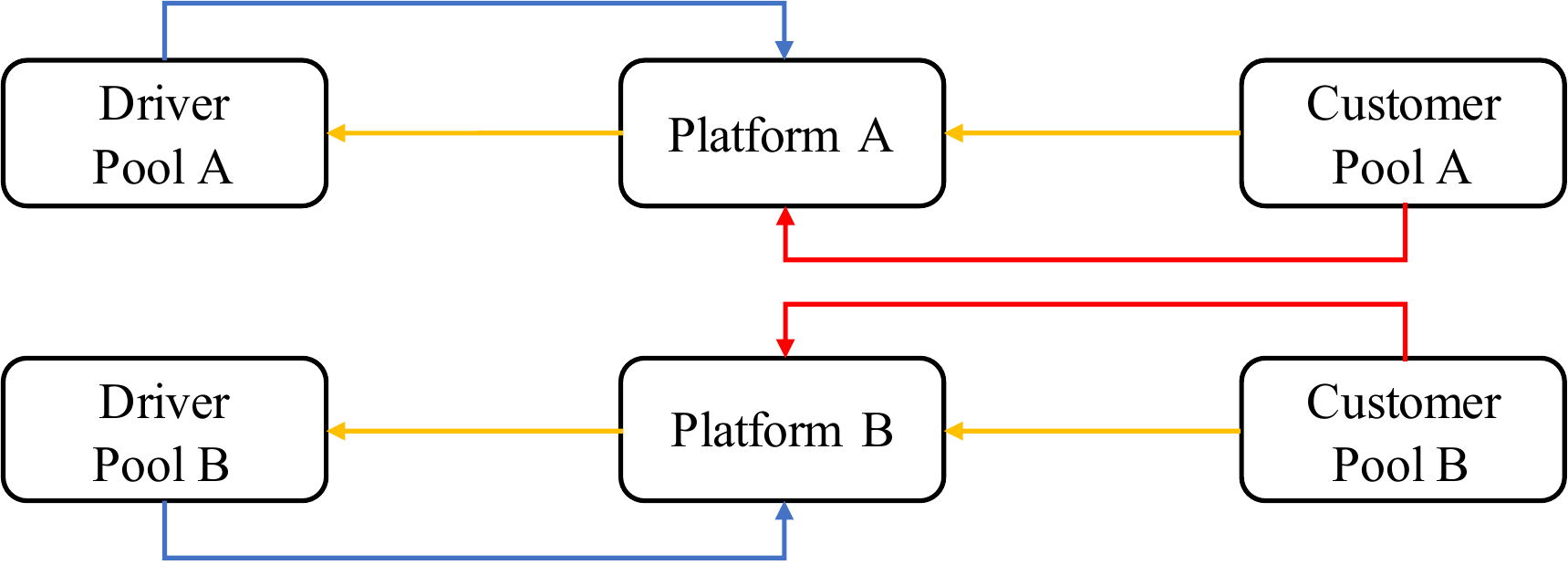}  
  \caption{multi-platform market}
  \label{fig:status_quo_market_2}
\end{subfigure}
\caption{\emph{Status quo} shared mobility markets.}
\label{fig:status_quo_market}
\end{figure}

In the \emph{status quo} single-platform market, market structure $\mu$ put no additional constraints over $\mathcal{Q}$.
For the \emph{status quo} multi-platform market with $n$ platforms, market structure $\mu$ divides driver set into $\mathcal{V} = \mathcal{V}_1 \bigcup \cdots \bigcup \mathcal{V}_n$ and customer set into $\mathcal{R} = \mathcal{R}_1 \bigcup \cdots \bigcup \mathcal{R}_n$.

The optimal assignment $\boldsymbol{m}$ only includes edges within subsets of $\mathcal{V}_i \bigcup \mathcal{R}_i, \forall i =1,...,n$. Assuming a set of demand and supply $\mathcal{Q}$ in a shared mobility market with a set of platforms $\mathcal{P}$ offering both dedicated and ride-pooling services, each platform $i \in \mathcal{P}$ tends to maximize its profit under a pricing scheme $(\boldsymbol{p}_i, \boldsymbol{q}_i, \boldsymbol{o}_i)$.
Considering all possible markets under this assumption but with different market structures, the \emph{status quo} single-platform and multi-platform markets serve as two extreme cases regarding the system efficiency (or the overall VMT).

To gain the largest revenue in a single-platform market, the platform tries to find the optimal driver-customer and customer-customer matchings within $\mathcal{Q}$ to minimize the overall VMT while serving all customers.
For any fragmented market, where the set of customers and drivers $\mathcal{Q}$ are divided into multiple disjoint subsets, each platform $i \in \mathcal{P}$ solves the optimal matching problem with its own set of customers and drivers $\mathcal{R}_i \bigcup \mathcal{V}_i$. 
Market fragmentation leads to VMT losses compared to single-platform markets. Although the monopoly in a market is typically considered as the source of inefficiency, we analyze the market efficiency purely from the VMT perspective, and a monopoly (single-platform) market leads to the minimum VMT among all possible shared mobility markets.

A single-platform market induces an unfair and unhealthy market due to a lack of competition~\cite{FLEISHMAN_2019}.
Facing the situation that the \emph{status quo} multi-platform market yields the worst system efficiency, we proposed four possible markets with different market structures and mechanisms.
In each of the proposed market structures, the hard boundaries between segmented platforms are partially ``dissolved.''
This dissolution of platform boundaries reduces the constraints of cross-platform trip matching, thus enabling more sharing opportunities and further reducing the overall VMT.

\subsection{Bilateral Trading Market}

The first proposed market, \emph{bilateral trading} market, improves the system efficiency of the \emph{status quo} multi-platform market by allowing trading, in an encrypted way to protect the data security, of customer or driver information between platforms.
\textit{Bilateral trading} market offers platforms with choices for trading supply or demand information that they can not efficiently serve\footnote{Drivers or customers that give platforms low or negative revenues.}.
Supply or demand information is traded between any two platforms if both platforms can improve their revenues, which also reduces the overall VMT.
For example, in a market with two platforms $A$ and $B$, a customer requests a ride with platform $A$ and all available drivers from platform $A$ are far away from this customer. There is an available driver from platform $B$ who is close to this customer. 
By allowing bilateral trading in the market, platform $A$ could trade the customer request information to platform $B$ at an appropriate price such that both platforms and the customer gain benefits from the trading.
Figure \ref{fig:bilateral_trading_market} illustrates the bilateral trading market with Figure \ref{fig:bilateral_trading_market_1} showing the information flow and Figure \ref{fig:bilateral_trading_market_2} indicating the payment flow.
Based on relationships in \emph{status quo} multi-platform market, all three types of flow can move between platforms in the bilateral trading market.

\begin{figure}[!h]
\begin{subfigure}{.5\textwidth}
  \centering
  \includegraphics[width=.95\linewidth]{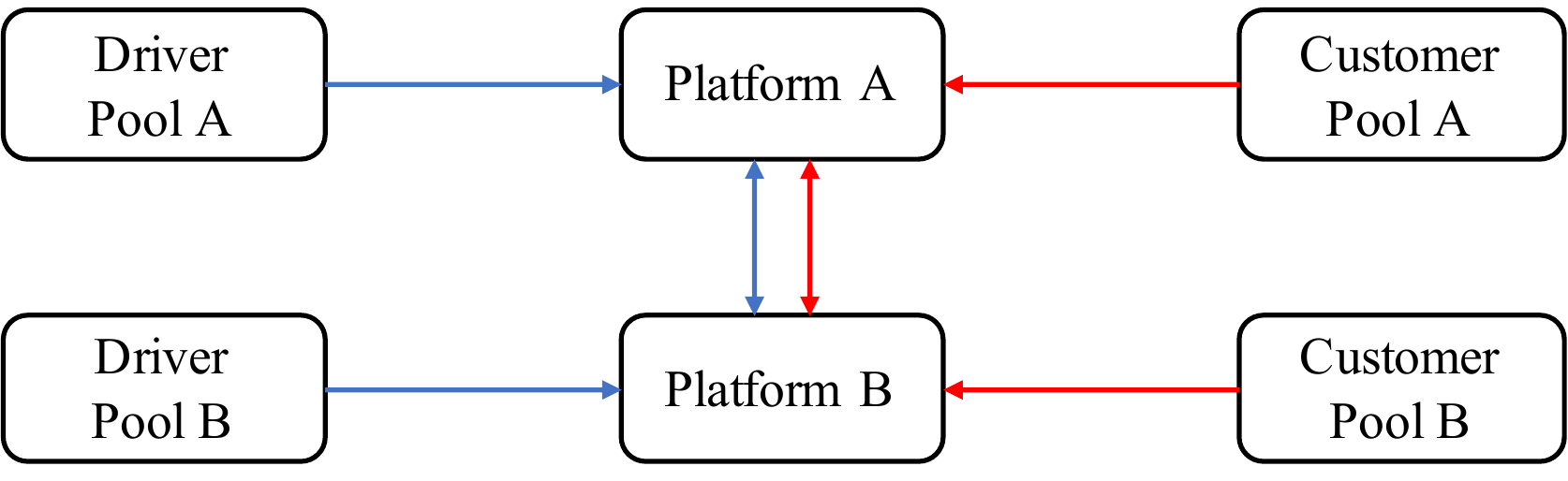}  
  \caption{Demand \& supply information flow}
  \label{fig:bilateral_trading_market_1}
\end{subfigure}
\begin{subfigure}{.5\textwidth}
  \centering
  \includegraphics[width=.95\linewidth]{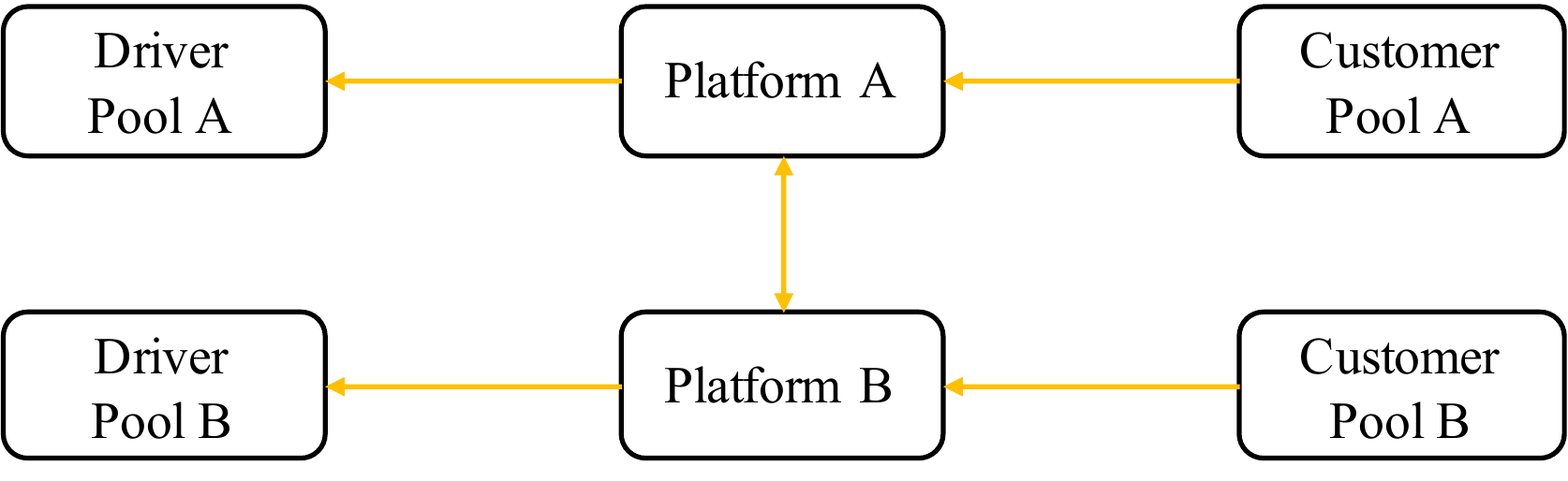}  
  \caption{Payment flow}
  \label{fig:bilateral_trading_market_2}
\end{subfigure}
\caption{Bilateral trading market illustration.}
\label{fig:bilateral_trading_market}
\end{figure}

For the bilateral trading market, the market structure $\mu$ allows matchings $m_{ij}$ between $R_i$ and $V_j, \forall i,j = 1,...,n$ in the optimal assignment.
In theory, bilateral trading markets can be as efficient as single-platform markets with an infinite number of tradings.
However, only a limited number of platform pairs $(i,j)$ will trade in practice and feasible matchings $m_{ij}$ between $R_i$ and $V_j$ can be infeasible if trading information does not bring extra benefits to both platforms even though the system efficiency can be improved.

With three existing components, drivers, customers, and platforms, in the shared mobility market, possibilities for deriving different market structures are tightly restricted.
In the following three proposed markets, we introduce a new component, the central broker, which represents non-profitable governmental authorities, U.S. Department of Transportation (DOT) for instance, or non-governmental organizations to facilitate the cooperation between platforms.

\subsection{Central Trading Market}

\textit{Central trading} market is generalized from the bilateral trading market by introducing a central broker to conduct the supply or demand information trading between multiple platforms. 
Instead of trading bilaterally, multiple platforms can trade simultaneously with the help of a central broker.
Figure \ref{fig:central_trading_market} explains the central trading market. 
Demand information, supply information, and payment flow are moving between platforms through the central broker.
The central trading market is the most common type of market structure in reality, the stock market for instance.
The stock exchange has a similar role as the central broker in the shared mobility market, which offers a platform for stock trading between issuing companies and investors.

\begin{figure}[!h]
\begin{subfigure}{.5\textwidth}
  \centering
  \includegraphics[width=.95\linewidth]{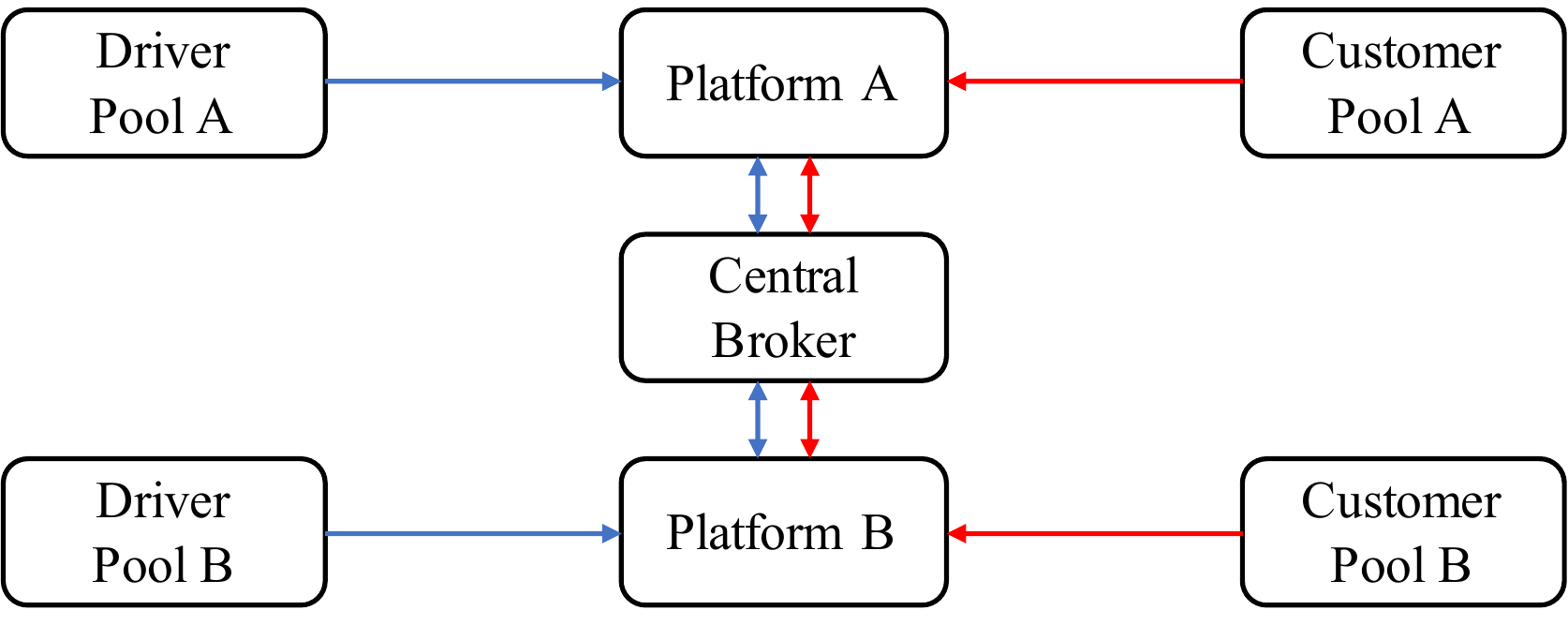}  
  \caption{Demand \& supply information flow}
  \label{fig:central_trading_market_1}
\end{subfigure}
\begin{subfigure}{.5\textwidth}
  \centering
  \includegraphics[width=.95\linewidth]{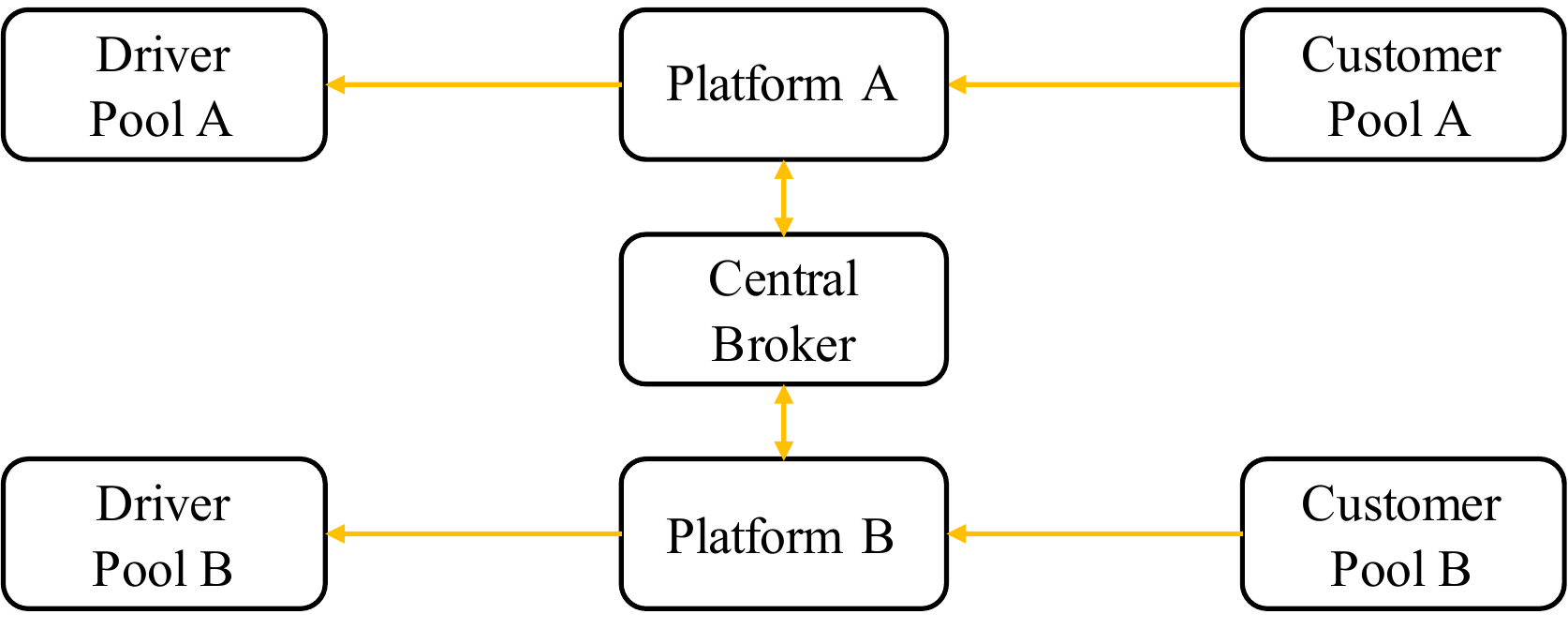}  
  \caption{Payment flow}
  \label{fig:central_trading_market_2}
\end{subfigure}
\caption{Central trading market illustration.}
\label{fig:central_trading_market}
\end{figure}

Similar to the bilateral trading market, the market structure $\mu$ of the central trading market permits feasible matchings $m_{ij}$ between $R_i$ and $V_j, \forall i, j = 1,...,n$. 
The feasibility of matchings depends on both spatiotemporal constraints and whether information tradings are beneficial for platforms. In a shared mobility market with multiple platforms, the central trading market gains extra benefits regarding the system efficiency compared to the bilateral trading market by introducing more trading opportunities.

\subsection{Cooperative Market}

\textit{Cooperative market} is a market where multiple platforms form an alliance and contributes their driver and customer information to a common pool. Platforms make an agreement on a common pricing scheme and profit distribution mechanism for the alliance. A central broker assigns drivers to customers in the common pool and distributes profit to platforms. This market can be as efficient as the \emph{status quo} single-platform market when all platforms in the market form a ``grand'' platform.

Figure \ref{fig:cooperative_market} illustrates the cooperative market. Figure \ref{fig:cooperative_market_1} shows that the central broker collects information from drivers and customers via platforms. The payment flow is displayed in Figure \ref{fig:cooperative_market_2}, where the central broker receives payments from customers and distributes them to platforms and then to drivers. For the cooperative market, the market structure $\mu$ allows feasible matchings $m_{ij}$ between $R_i$ and $V_j, \forall i, j \in \Bar{N}$, where $\Bar{N}$ indicates the set of platforms in the alliance.

\begin{figure}[!h]
\begin{subfigure}{.5\textwidth}
  \centering
  \includegraphics[width=.95\linewidth]{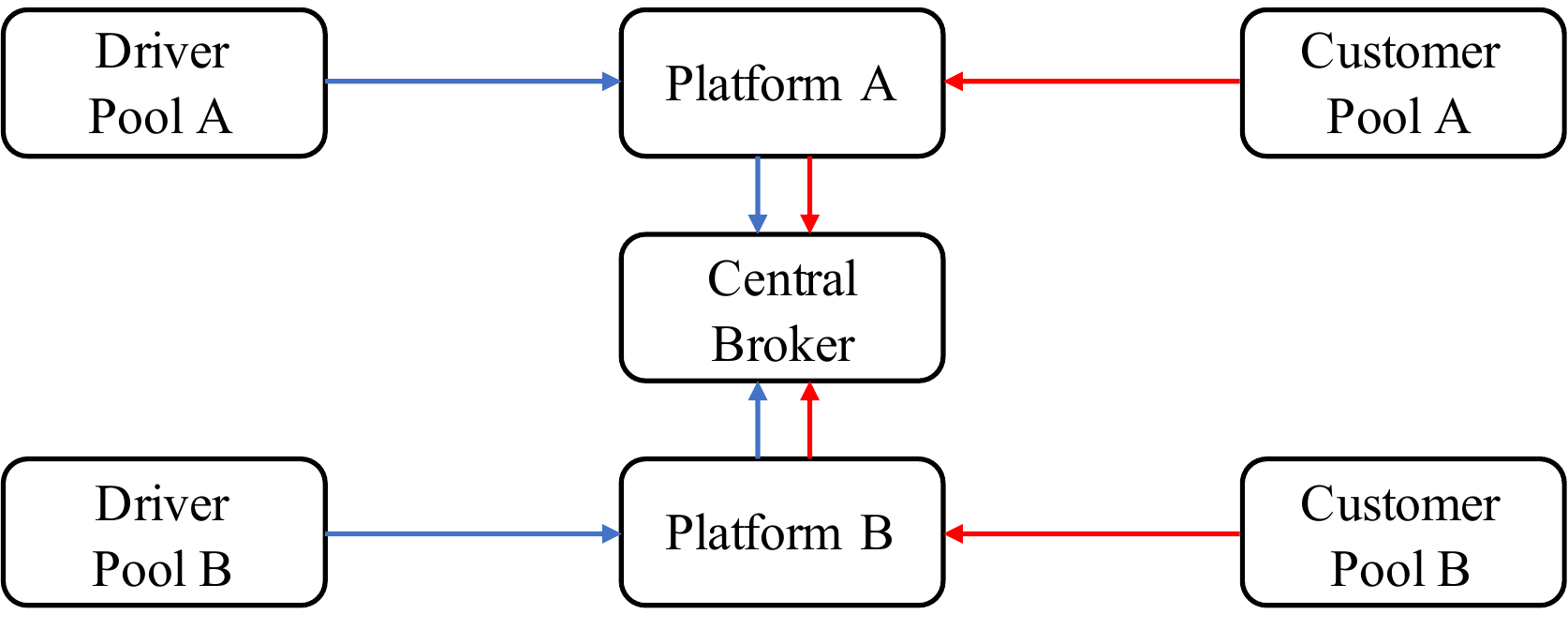}  
  \caption{Demand \& supply information flow}
  \label{fig:cooperative_market_1}
\end{subfigure}
\begin{subfigure}{.5\textwidth}
  \centering
  \includegraphics[width=.95\linewidth]{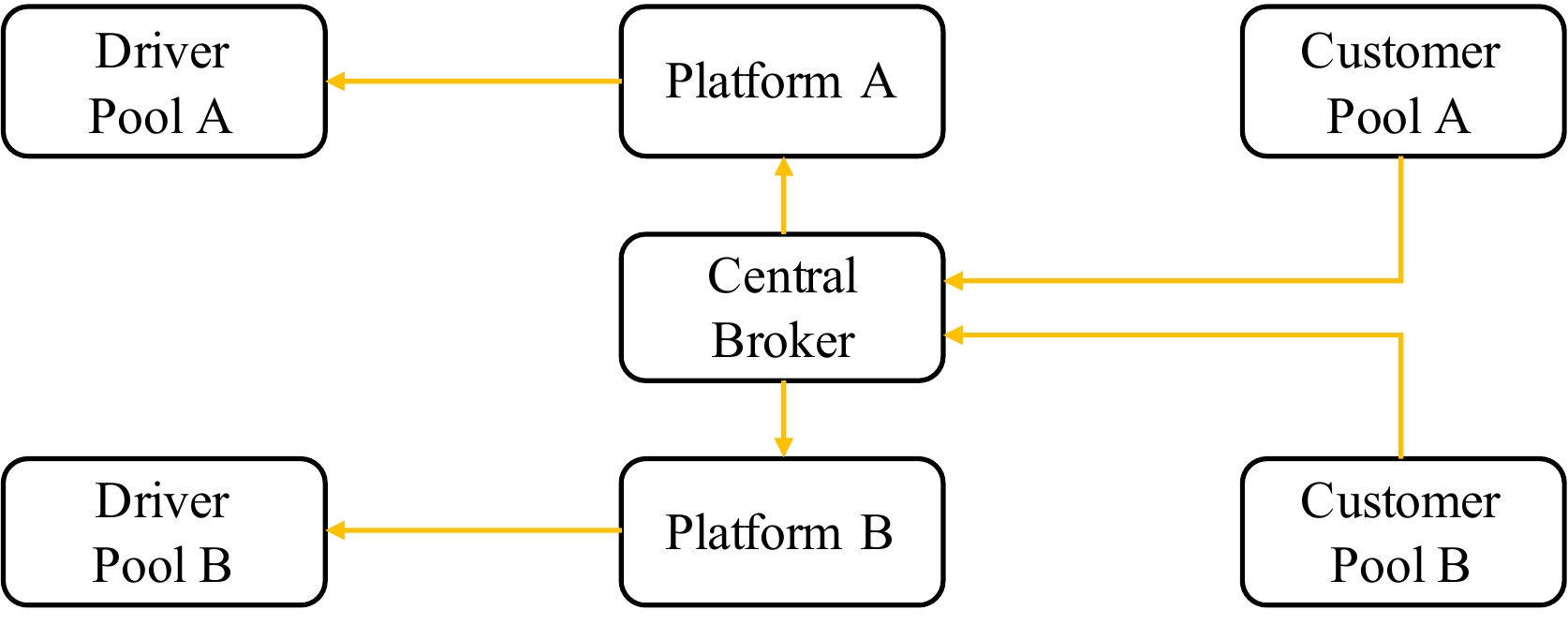}  
  \caption{Payment flow}
  \label{fig:cooperative_market_2}
\end{subfigure}
\caption{Cooperative market illustration.}
\label{fig:cooperative_market}
\end{figure}

\subsection{Shared Mobility Marketplace}

The central broker could also play a more fundamental role which gathers demand or supply information in the shared mobility market.
For the next proposed market, we assume that the central broker gathers demand information\footnote{The case where a central broker collects the supply information can be treated as an equivalent case.} and platforms gather supply information.

\emph{Shared mobility marketplace} is a market where the central broker acts as an auctioneer and sells demand information to platforms based on certain mechanisms, such as the single-item VCG (Vickrey-Clarke-Groves) mechanism. Given the location of their available drivers, platforms bid for customer requests, and the central broker distributes requests to platforms and charges the price of information. Platforms assign drivers to customers after getting demand information via the auction. The detailed mechanisms will be discussed in the following section.

Figure \ref{fig:shared_mobility_marketplace} explains the shared mobility marketplace. As shown in Figure \ref{fig:shared_mobility_marketplace_1}, platforms collect supply information from drivers and receive demand information from the central broker, who gathers demand information from customers.
Figure \ref{fig:shared_mobility_marketplace_2} describes the payment flow, where platforms receive payments from customers and a proportion of their revenue is used to pay drivers' salaries and another proportion to pay the price of information charged by the central broker.

\begin{figure}[!h]
\begin{subfigure}{.5\textwidth}
  \centering
  \includegraphics[width=.95\linewidth]{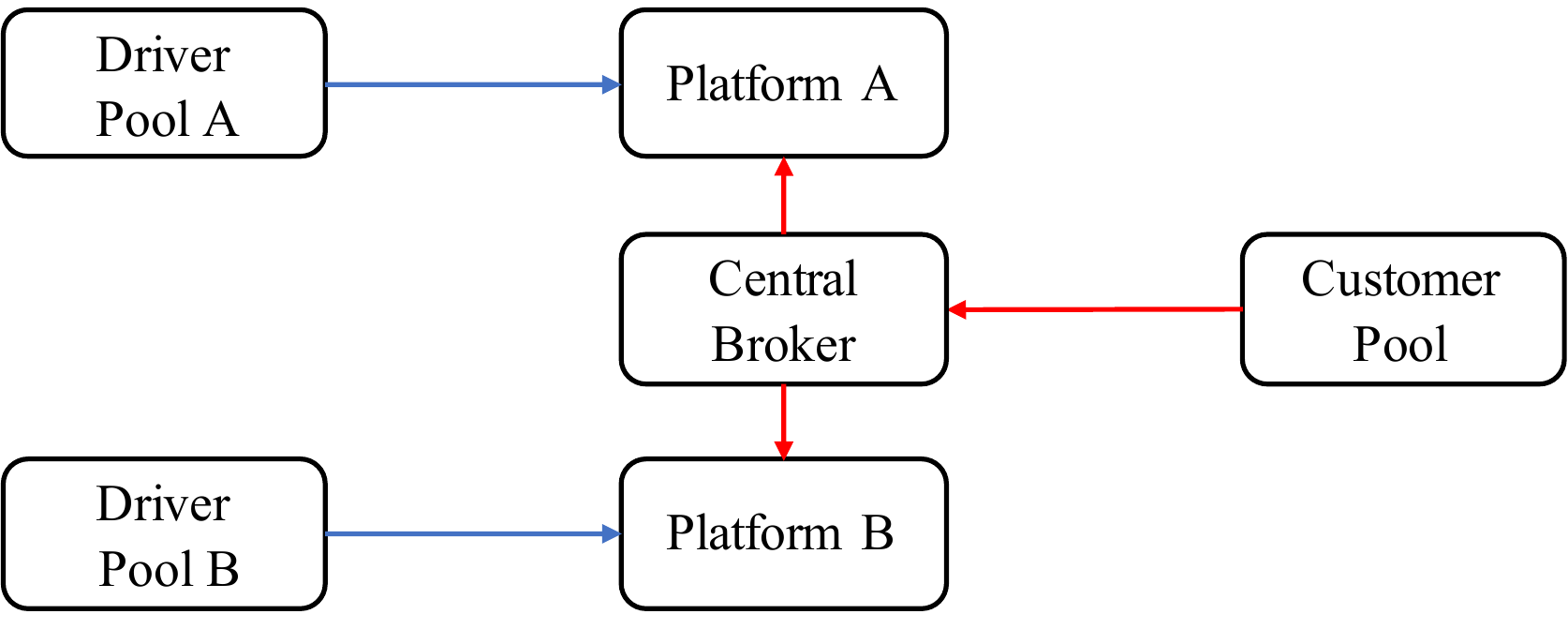}
  \caption{Demand \& supply information flow}
  \label{fig:shared_mobility_marketplace_1}
\end{subfigure}
\begin{subfigure}{.5\textwidth}
  \centering
  \includegraphics[width=.95\linewidth]{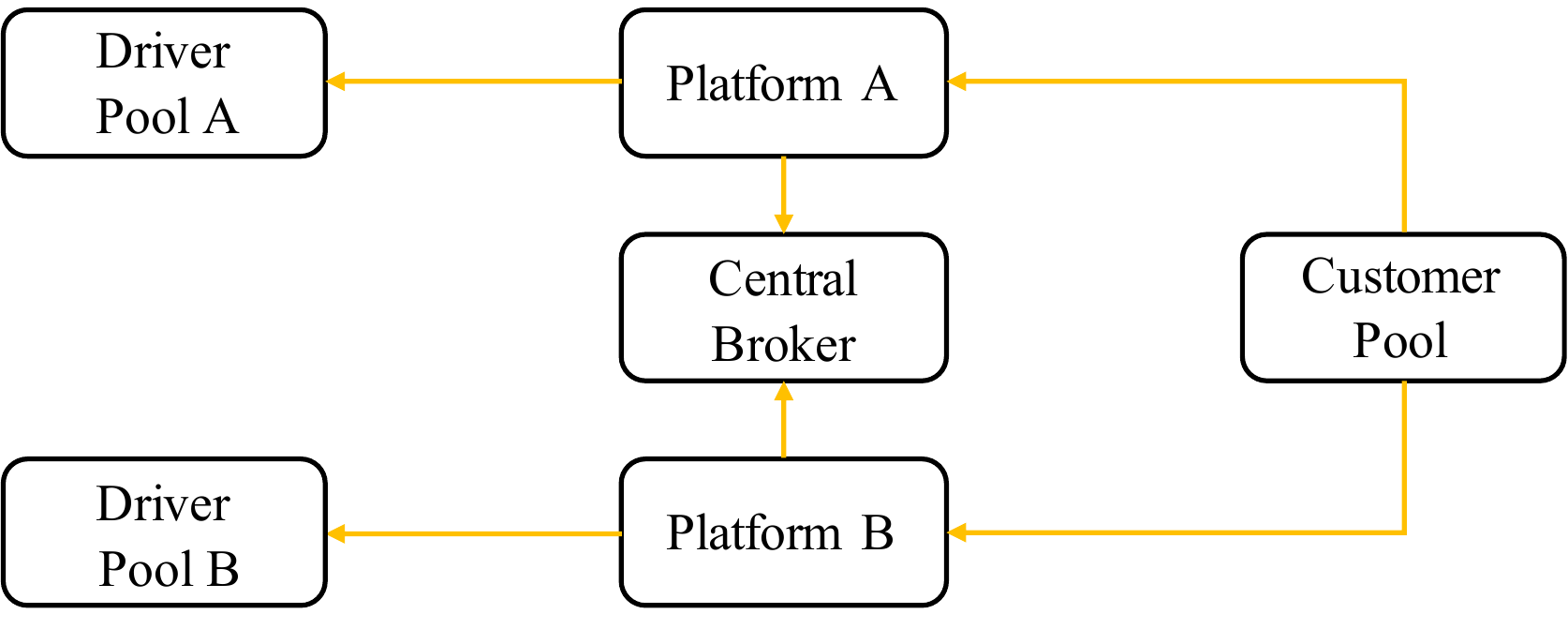}
  \caption{Payment flow}
  \label{fig:shared_mobility_marketplace_2}
\end{subfigure}
\caption{Shared mobility marketplace illustration.}
\label{fig:shared_mobility_marketplace}
\end{figure}

For the shared mobility marketplace, the driver set is split by $n$ platforms, i.e., $\mathcal{V} = \mathcal{V}_1 \bigcup \cdots \bigcup \mathcal{V}_n$. 
The market structure $\mu$ enables feasible matchings between $\mathcal{R}$ and $\mathcal{V}_i, \forall i=1,...,n$.
When platforms bid truthfully, indicating that platforms submit bids based on their true revenue for serving customers, the optimal assignment $\boldsymbol{f}(\cdot)$ is \emph{bona fide} VMT minimization.

\subsection{Summary}

Table \ref{tab:combined_table} summarizes key elements of the two existing and four proposed market structures and enumerates the roles of platforms and central brokers in each market structure.
There are two major factors to distinguish between different market structures: market segmentation and mediation of a central broker. Furthermore, the roles of platforms and central brokers are diverse across all markets. For \emph{status quo} single-platform and multi-platform markets, platforms are in charge of contracting drivers, collecting customer requests, matching, and setting pricing. For each proposed market, we allow a central broker to take over tasks from platforms.

\begin{table}[h!]
\centering
\begin{tabular}{| l | c | c |}
 \hline 
 \textbf{Market} & \textbf{Segmentation} & \textbf{Central broker} \\ [0.5 ex] 
 \hline
 I. \emph{Status quo} single-platform  & $\times$ & $\times$  \\
 II. Cooperative & \checkmark & \checkmark  \\
 III. Shared mobility marketplace & \checkmark & \checkmark  \\ 
 IV. Central trading & \checkmark & \checkmark  \\
 V. Bilateral trading & \checkmark & $\times$  \\
 VI. \emph{Status quo} multi-platform & \checkmark & $\times$  \\ [0.5 ex] 
 \hline
 \hline
 \textbf{{}Role of platform/central broker} & \textbf{Platform} & \textbf{Central broker} \\ [0.5 ex] 
 \hline
 1. Contracts with drivers  & I, II, III, IV, V, VI  & - \\
 2. Receives customer requests & I, II, IV, V, VI & III \\
 3. Matches customers with drivers & I, III, IV, V, VI & II  \\ 
 4. Supply/Demand information buyer & III, IV, V &  - \\
 5. Supply/Demand information seller & V & III, IV  \\
 6. Pricing & I, II,  IV, V, VI &  III \\
 7. Profit distribution & - & II \\
 \hline
\end{tabular}
\caption{Summary of different market structures. }
\label{tab:combined_table}
\end{table}

\section{Market Mechanisms}

In this section, we will explore the potential for market implementation by examining the specific mechanisms that can be put in place for each proposed market. This will be done by analyzing the underlying structure of each market and delving into the specifics of what would be required to make it a reality.

\subsection{Cooperative Market Mechanism}

The cooperative market is a market in which platforms form a multilateral alliance while each platform remains its own user base and fleet. Platforms contract with drivers and receive customers' request information while the central broker assigns customers to drivers. Platforms make an agreement on the pricing structure $(\boldsymbol{p}^*, \boldsymbol{q}^*, \boldsymbol{o}^*)$ and a central broker is responsible for assigning drivers to customers to maximize the overall profit of the alliance. In order to incentivize platforms to cooperate and participate in the alliance, a fair profit allocation mechanism has to be established.
This paper introduces three profit allocation mechanisms for the cooperative market. 

\subsubsection{Basic Definitions}

The profit allocation problem has been studied in the cooperative game theory, where players (platforms in the shared mobility market setting) are able to form binding commitments or coalitions.

First, we introduce the fundamental model in the cooperative game theory. Let $N = \{1,...,n\}$ be a finite set of players, each non-empty subset of $N$ is called a \textit{coalition} and $N$ is referred to as the \textit{grand coalition}. 
For each coalition $S$, the collected payoff value is defined as $v(S)$, where $v: 2^{N} \longrightarrow \mathbb{R}$ is a characteristic function associated with every coalition $S$ to a value $v(S)$. The pair $(N,v)$ is called a \textit{cooperative game}.

In a cooperative game $(N,v)$, the main focus is to find acceptable allocations of payoffs in the grand coalition $N$. 
Let's define an \textit{allocation} to be a collection of numbers $(x_1, x_2, ..., x_n)$ where $x_i$ denotes the value received by player $i$. An allocation $(x_1, x_2, ..., x_n)$ is \textit{efficient} if  $\sum_{i=1}^{n} x_i = v(N)$ and is \textit{individually rational} (IR) if $x_i \geq v(\{i\}), \forall i \in N$. Individual rationality implies that each player must receive at least as much value as that player receives without interacting with other players. Then, we introduce the key definition in the cooperative game \cite{Gillies1959}:

\begin{definition}[Core of cooperative games]
\label{def:core}
    The core of a cooperative game $(N, v)$ is a set of payoff allocations $\boldsymbol{x} = (x_1,x_2,...,x_n) \in \mathbb{R}^N$ satisfying:
    \begin{itemize}
        \item \textit{(Efficiency)} $\sum_{i \in N} x_i = v(N)$,
        \item \textit{(Coalitional Rationality)} $\sum_{i \in S} x_i \geq v(S), \forall S \subset N.$
    \end{itemize}
\end{definition}

The coalitional rationality property indicates that no coalition can upset allocations for the grand coalition in the core.  
In other words, no players can gain extra benefits by leaving the grand coalition. If such an allocation $(x_1, x_2, ..., x_n)$ exists, we can get a profit allocation mechanism that is stable and beneficial for every player in the game. However, a core does not guarantee to exist in general cooperative games. When evaluating a profit allocation mechanism, besides stability and being beneficial for platforms, fairness is also an important dimension to consider. A fair allocation indicates an equitable profit distribution between platforms without favoritism.
In the following subsection, we propose three profit allocation mechanisms based on allocation existence, fairness, and stability.

\subsubsection{Three Profit Allocation Mechanisms}

In the shared mobility cooperative market context, $N$ represents the set of platforms and $v$ denotes the net profit function under the pricing scheme $(\boldsymbol{p}^*, \boldsymbol{q}^*, \boldsymbol{o}^*)$. For a coalition $S$ with $m$ platforms, the allocation mechanisms generate a collection of numbers $(x_1,...,x_m)$ where $x_i$ denotes the profit received by platform $i$. A coalition is stable if it exists in the core of the cooperative game~\cite{Gillies1959}. In this section, we introduce three profit allocation mechanisms: Shapely value, Equal Profit Method (EPM), and contribution-based allocation mechanism.

The Shapley value \cite{shapley1988} is a wide-accepted unique allocation mechanism in the cooperative game. It ensures the existence and uniqueness of the profit allocation. However, this allocation mechanism does not consider fairness and is not guaranteed to be stable. For each player $i$ in a cooperative game $(N, v)$, the payoff based on the Shapley value is
\begin{equation*}
    x_i(v) = \sum_{S \subseteq N \setminus \{ i\}} \frac{|S|!(n - |S| - 1)!}{n!}\left[v(S \cup \{i\}) - v(S)\right],
\end{equation*}
where $n$ is the total number of players in the game.

EMP is a ``fairer'' profit allocation mechanism proposed by \citet{FRISK2010}. The EPM mechanism was originally used to handle the cost allocation problem for collaborative forestry transportation planning. This allocation mechanism incorporates fairness by minimizing the relative profit gain among platforms, and it produces a stable allocation. However, the allocation does not guarantee to exist. With EPM, the relative profit of platform $i$ is defined as $\frac{x_i - v(\{i\})}{v(\{i\})} = \frac{x_i}{v(\{i\})} - 1$, indicating the ratio between profit gain after joining the alliance and foregoing profit. The difference in relative profit between any two platforms $i, j$ is $\frac{x_i}{v(\{i\})} - \frac{x_j}{v(\{j\})}$. The detailed Linear Program (LP) used for solving the core-guaranteed allocation is described in \ref{appen:cooperative}. 

Another important factor when considering the profit allocation between platforms is contribution. The profit distribution should consider the individual platform's contribution to the alliance. The contribution of each platform represents the proportion of the alliance's total profit related to customer and driver information provided by the platform. The contribution-based allocation mechanism is a core-guaranteed mechanism proposed by \citet{DAI2012633}. This profit allocation mechanism is first designed for collaborative logistics, where multiple carriers collaborate with each other to share transportation requests and vehicle capacity. It incorporates the fairness and stability of the allocation, but its existence is not ensured by this mechanism.
Formally, the contribution parameter for each platform $i$ is defined as
\begin{equation*}
    w_i = \frac{\theta_1 \cdot c_i + \theta_2 \cdot R_i}{\theta_1 \cdot \sum_{i \in N}c_i + \theta_2 \cdot \sum_{i \in N}R_i},
\end{equation*}
where $c_i$ and $R_i$ are the cost and revenue for platform $i$ in the alliance and $\theta_1$ and $\theta_2$ are two positive weights, which specifies the importance of offering supply and demand information to the profit creation of the alliance.
For each platform $i$, the cost indicates salaries for drivers who contract with platform $i$, and the revenue is represented by the payment of customers who request a ride via platform $i$.
$\theta_1$ and $\theta_2$ are two positive weights, which specify the importance of offering supply and demand information to the profit creation of the alliance. In this paper, we use the same definition for both parameters proposed by \citet{DAI2012633}.
The \textit{Profit Margin on Cost} parameter, $\theta_1$, is defined as the total net revenue of all platforms divided by the total cost of all platforms after collaboration. The \textit{Gross Profit Margin} parameter, $\theta_2$, is described by the total net revenue of all platforms divided by the total profit of all platforms. And the detailed LP to solve the contribution-based profit allocation is displayed in \ref{appen:cooperative}.

\subsection{Shared Mobility Marketplace Mechanism}

In the shared mobility marketplace, a central broker serves as an \emph{auctioneer}, collects and sells demand information to platforms according to certain mechanisms. Assuming that platforms bid for trip information based on their valuations, which are denoted by their net profits for serving a given trip. The central broker distributes customers to platforms to maximize the overall valuations and charges platforms the price of information. Under this market structure, platforms contract with drivers, set pricing scheme $\boldsymbol{o}^*$ for drivers, buy demand information from the central broker and match drivers with customers. The central broker collects customer requests, set pricing scheme $(\boldsymbol{p}^*, \boldsymbol{q}^*)$ for customers, and sells demand information to platforms. 

In this section, we propose one auction mechanism from the single-item auction. In the single-item auction, customer requests will be sold to platforms in sequential order. More complicated auction mechanisms (e.g., combinatorial auction) can be introduced in our framework in future studies. 

\subsubsection{Market Mechanism Design}

Consider the situation where a customer request is auctioned among $n$ platforms through a sealed-bid auction. In the sealed-bid auction, bidders place bids in sealed envelopes and simultaneously submit envelopes to the auctioneer, and the bidder with the highest price wins the item. For a customer request, each platform $i$ has a \textit{valuation} $v_i$ based on information of their drivers, indicating the net profit for serving it~\footnote{Platform has zero valuation if the trip can not be fulfilled or the net profit is negative.}. We would like to design an auction mechanism such that platforms bid truthfully (according to their valuations).

A naïve mechanism for the auctioneer is to assign the trip to the platform $i$ with the largest valuation, i.e., $i = \argmax_{j}v_j$. However, if the central broker receives untruthful biddings from platforms, implying that platforms bid prices other than their valuations, the trip will be assigned to the platform with the highest bid rather than the highest valuation. Therefore, we need to design an auction mechanism such that platforms bid truthfully. Meanwhile, it is necessary to charge the platform that wins the trip with the price of information. This necessary condition is proven by Proposition \ref{prop:1}.

\begin{prop}
\label{prop:1}
    If the auction mechanism assigns the trip for free to the platform with the highest bid, platforms will not bid truthfully according to their valuations.
\end{prop}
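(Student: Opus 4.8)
The plan is to establish the claim by exhibiting a profitable deviation from truthful bidding, thereby showing that truth-telling can be neither a weakly dominant strategy nor a Nash equilibrium of the proposed auction. First I would fix the utility model implied by the mechanism: since the winning platform receives the trip \emph{for free}, a platform $i$ with valuation $v_i$ obtains utility $v_i$ if it wins and $0$ otherwise, \emph{independent} of the magnitude of its bid $b_i$. The only role of the bid is to determine the winner through the rule $i^\star = \argmax_j b_j$. Isolating this observation is the conceptual crux: the payment rule (here, zero payment) is what decouples a platform's payoff from its reported bid.

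Next I would consider any platform $i$ with strictly positive valuation $v_i > 0$ and, holding the competitors' bids fixed, compare the truthful bid $b_i = v_i$ against an overbid $b_i'$ chosen to exceed $\max_{j \neq i} b_j$. If some competitor submits a bid exceeding $v_i$, then under truthful bidding platform $i$ loses and earns $0$, whereas by submitting $b_i'$ it wins the trip and earns $v_i > 0$; in the remaining case platform $i$ already wins and the two bids yield the same payoff. Hence overbidding weakly dominates truthful bidding for every platform with positive valuation and is strictly better in at least one configuration, so truth-telling is not a best response.

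To make the conclusion concrete, I would spell out a minimal two-platform instance: let $0 < v_1 < v_2$. If both bid truthfully, platform $2$ wins and platform $1$ earns $0$; but platform $1$ can deviate to any bid $b_1 > v_2$, capture the (free) trip, and secure utility $v_1 > 0$. This explicit profitable deviation from the truthful profile certifies that truthful bidding is not an equilibrium, which proves Proposition \ref{prop:1}.

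I anticipate the only delicate point to be conceptual rather than computational, namely pinning down precisely what ``bid truthfully'' means and which solution concept is violated. Because the mechanism charges the winner nothing, each platform's dominant incentive is simply to bid as high as possible to guarantee the win, so the bid carries no information about the true valuation; the argument therefore relies on the admissible bid space being unbounded above, which guarantees that an overbid beating all rivals always exists (and lets us avoid ties by choosing a strict inequality). This is exactly the failure that motivates charging a \emph{price of information}, as in the VCG / second-price rule developed in the subsequent mechanism.
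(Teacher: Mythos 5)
Your proposal is correct and follows essentially the same route as the paper's own (one-sentence) proof: because the winner pays nothing, a platform's payoff is decoupled from its bid, so any platform with positive valuation profits by bidding arbitrarily high, and truthful bidding fails to be a best response. Your version merely makes this rigorous with the explicit utility model, the weak-dominance comparison, and the two-platform deviation example, which is a welcome tightening but not a different argument.
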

\begin{proof}
    Since platforms do not need to pay a price when increasing their bids, the dominant strategy for platforms is bidding as large as possible to win the trip if they have positive valuations. 
    Therefore, platforms will not bid truthfully.
\end{proof}

We introduce the VCG mechanism into the shared mobility market context to design a single-item auction mechanism. Under the VCG mechanism, bidders (or platforms) bid truthfully according to their true valuations \cite{Vickrey61}. The difference between the general auction and the auction in the shared mobility marketplace is that platforms have to serve customers to earn profit instead of owning them as assets. Considering a traditional auction situation where player $i$ wins the item with a valuation of 100 and payment of 98 in a general auction, it is reasonable for the winner to buy the item since the winner pays less money to get an item with a higher valuation. However, if the same case happens in the shared mobility marketplace auction, it is extremely unfair for platforms to participate in the auction because 98\% of their net profit goes to the central broker as the price of information.
To maintain a fair marketplace, we propose the following single-item auction mechanism:

\begin{prop}
\label{prop:single_item}
    In the single-item auction in the shared mobility marketplace, the central broker sells customer requests in sequence to platforms with the highest bid, and the winning platform pays a price of information equal to $\gamma$ proportion of the second-highest bid, where $\gamma$ represents the rate for the price of information.
\end{prop}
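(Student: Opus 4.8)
The plan is to prove that the proposed mechanism is \emph{dominant-strategy incentive compatible}, i.e., that each platform maximizes its utility by bidding its true valuation $v_i$, which is precisely the ``bid truthfully'' goal motivating the construction. I would follow the classical Vickrey argument: fix the bids of all platforms other than $i$ and let $h = \max_{j \neq i} b_j$ denote the highest competing bid. The decisive structural observation is that, as in any second-price or VCG rule, the price of information that platform $i$ pays upon winning, namely $\gamma h$, depends only on its competitors' bids and not on its own bid; platform $i$'s own bid affects only \emph{whether} it wins, never \emph{how much} it pays.

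From here the key steps are routine. First, the allocation rule is monotone: increasing $b_i$ can only turn a loss into a win. Second, the threshold bid is exactly $h$, so platform $i$ wins iff $b_i > h$ (with ties broken arbitrarily), yielding utility $v_i - \gamma h$ when it wins and $0$ otherwise. Third, a two-case comparison of the truthful bid $b_i = v_i$ against any over- or under-bid is meant to show that no deviation raises $i$'s utility: underbidding only risks forfeiting a profitable win, while overbidding only risks winning at a loss. In the baseline VCG case $\gamma = 1$ this argument closes immediately and reproduces Vickrey's theorem, strengthening Proposition \ref{prop:1}. As a byproduct, individual rationality is immediate, since the winning platform, being the highest-valuation one, retains $v_{(1)} - \gamma v_{(2)} \ge v_{(1)} - v_{(2)} \ge 0$ whenever $\gamma \le 1$; this is exactly the fairness improvement the discount $\gamma$ is intended to deliver relative to the full-price VCG scheme.

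The step I expect to be the main obstacle is reconciling the winning threshold with the break-even price once $\gamma < 1$. The discount decouples the bid level at which $i$ begins to win, namely $h$, from the valuation level at which winning becomes profitable, namely $\gamma h$: whenever $\gamma h < v_i < h$, a truthful bid loses even though winning would yield the strictly positive surplus $v_i - \gamma h$, so overbidding becomes tempting and the clean Vickrey alignment fails. Handling this region is the crux, and I would address it by one of three routes, stating explicitly which the conclusion rests on: (i) establishing unconditional truthfulness only at $\gamma = 1$ and treating general $\gamma$ as a fairness reweighting of the price rather than a new incentive regime; (ii) imposing an explicit assumption that pins the payment to the critical value $h$; or (iii) weakening the claim to individual rationality together with value-maximization under truthful play. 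The honest difficulty is that the dominant-strategy property in its strongest form holds only when the payment equals the critical bid, i.e.\ at $\gamma = 1$, so the proof must be careful about the sense in which truthfulness survives the discount.
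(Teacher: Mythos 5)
Your analysis is essentially correct, and the obstacle you flag in your final paragraph is a genuine one --- in fact, it is a gap in the paper's claim itself, not merely in your attempt. The paper offers no formal proof of Proposition~\ref{prop:single_item}: the proposition is stated as a mechanism specification, and truthfulness is asserted only informally by appeal to the Vickrey/VCG result, with the sentence that ``given such an auction mechanism, each platform will bid truthfully and the revenue of platforms can be protected.'' Your Vickrey-style case analysis is the natural formalization of that appeal, and it closes cleanly exactly when $\gamma = 1$, which matches your diagnosis. For $\gamma < 1$ your counterexample region is real: fix the highest competing bid $h$ and take $\gamma h < v_i < h$; truthful bidding loses and yields $0$, while any bid above $h$ wins at price $\gamma h < v_i$ and yields $v_i - \gamma h > 0$, so truthful bidding is not even a best response, let alone dominant. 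The root cause is the one you name: the payment no longer equals the critical bid once the discount is applied.

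A concrete way to settle which of your three fallback routes is the right one: observe that the $\gamma$-discounted second-price rule is \emph{strategically equivalent} to a standard Vickrey auction under the bid rescaling $\tilde{b}_i = \gamma b_i$. The allocation (highest $b_i$ wins) and the payment ($\gamma$ times the second-highest $b$, i.e.\ the second-highest $\tilde{b}$) are exactly those of a second-price auction in the $\tilde{b}$ variables, so the dominant strategy is $\tilde{b}_i = v_i$, i.e.\ $b_i = v_i/\gamma$. The winner is still the highest-valuation platform (so the allocation remains efficient, consistent with the paper's claim that the assignment is \emph{bona fide} VMT-minimizing under truthful valuations), but the equilibrium payment is the full second-highest valuation --- the discount is neutralized by strategic rescaling and protects platform revenue only against naive, literally-truthful bidders. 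This confirms your route (i): unconditional dominant-strategy truthfulness holds only at $\gamma = 1$, and for $\gamma < 1$ the honest statement is efficiency-of-allocation plus individual rationality under truthful play (your route (iii)), where your bound $v_{(1)} - \gamma v_{(2)} \geq v_{(1)} - v_{(2)} \geq 0$ is correct. Your proposal, by being explicit about this, is more rigorous than the paper's own treatment; note also that Proposition~\ref{prop:1} (which the paper does prove) rules out only the free-assignment rule and does not establish the truthfulness of the discounted rule.
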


In the shared mobility marketplace, the central broker's intention for collecting payments from platforms is to maintain a truthful-bidding auction instead of obtaining its own profit. Given such an auction mechanism, each platform will bid truthfully and the revenue of platforms can be protected.

In the implementation of the shared mobility marketplace mechanism, platforms and the central broker repeat the same auction procedure at the end of a given time interval $\Delta$, where $\mathcal{R} = \{r_1, ..., r_m\}$ represents a set of $m$ customer requests and $\mathcal{V}_1,...,\mathcal{V}_n$ indicate sets of available drivers for $n$ platforms. The central broker sells each customer request sequentially in a randomized order. When a customer request $r_j$ is offered to sell, the valuation $v_i$ by the platform $i$ equals the marginal revenue of bringing $r_j$ into the platform's request pool $R_i$. At the end of each time interval, platform $i$ will conduct a matching problem given the set of available drivers $V_i$ and the request pool $R_i$. When all platforms have non-positive valuations, the customer request $r_j$ will be sold again to platforms in the next time interval until the customer leaves the system (reach the maximum wait time).

\subsection{Central Trading Market Mechanism}
Platforms in the shared mobility market can not efficiently serve all customers because of the imbalance between supply and demand.
Under the central trading market mechanism, the central broker serves as the demand (customer request) information seller while the platforms are buyers.

Given a set of customer information $\mathcal{R}_i$ and a set of available drivers $V_i$ for each platform $i$ within a time period $\Delta$, platform $i$ conducts a matching step between customers and drivers and produces a set of unsatisfied requests $\Bar{\mathcal{R}}_i$ and a set of unmatched drivers $\Bar{\mathcal{V}}_i$. The central trading market works as follows: i) the central broker collects demand information $\Bar{\mathcal{R}}_i$, supply information $\Bar{\mathcal{V}}_i$, and pricing schemes $(\boldsymbol{p}^*, \boldsymbol{q}^*, \boldsymbol{o}^*)$ from each platform $i$; ii) the central broker distributes customer requests to platforms by solving a driver-customer matching problem between $\Bar{\mathcal{R}} = \{\Bar{\mathcal{R}}_1,...,\Bar{\mathcal{R}}_n \}$ and $\Bar{\mathcal{V}} = \{\Bar{\mathcal{V}}_1,...,\Bar{\mathcal{V}}_n \}$, which maximizes the total valuation; iii) each platform pays the price of information at rate $\gamma$ to other platforms who sell customer requests through the central broker.

If the customer request has not been traded, it will remain with its original platform until it is either served, traded, or left. When a driver from platform $i$ serves two requests from platforms $j, k$ in a shared ride and makes profit $p$ for the platform, platform $i$ will distribute the price of information $\gamma p$ to platforms $j$ and $k$ based on the profit of serving each request individually.

\subsection{Bilateral Trading Market Mechanism}

The bilateral trading market improves the system efficiency of $\emph{status quo}$ multi-platform market without introducing any third-party organization. In the bilateral trading market, platforms trade demand information directly with each other.
Similar to the central trading market, each platform $i$ has a set of unsatisfied requests $\Bar{\mathcal{R}}_i$ after conducting the matching step. However, unlike the central trading market where only available vehicles are considered, each platform can leverage all vehicles in the system when making a decision on whether to trade or not. Given $N = |\mathcal{P}|$ platforms in the market, there are ${N \choose 2}$ different bilateral trading possibilities within the shared mobility market. At each trading iteration, a random sequence of ${N \choose 2}$ trading possibilities is generated and platforms perform bilateral trading based on the generated sequence. For any two platforms $i, j$ with unsatisfied requests $\Bar{\mathcal{R}}_i, \Bar{\mathcal{R}}_j$, another random sequence is generated $\{ r \mid \forall r \in \Bar{\mathcal{R}}_i \cup \Bar{\mathcal{R}}_j \}$, indicating a series of demand information to be traded. A trade happens between the seller platform $i$ and the buyer platform $j$ if the platform $j$ can gain revenue $p$ from this trade when adding the request to the current request pool, indicating that there is a marginal benefit brought by serving an additional request from the other platform. The price of information $\gamma p$ is then transferred from platform $j$ to platform $i$. A traded request will not be included in future trading between platforms.

\section{Numerical Experiments}

In this section, we leverage a ride-sharing simulator using real-world ride-hailing data from NYC to evaluate the performances of proposed market structures. The simulators in this paper are models with Python 3.9.12 and solved with Gurobi 9.5.2~\cite{gurobi} on a 3.2 GHz Apple M1 Pro processor with 16 GB Memory.

\subsection{Simulation Overview} 

To evaluate the effectiveness of different market structures, we developed a simulation tool that uses real-world ride-hailing data from the Manhattan Borough of New York City. The simulation framework is shown in Figure \ref{fig:simulator}. The simulator models the trading and matching of customer requests and calculates various performance metrics, such as total vehicle miles traveled, percentage of unsatisfied requests, average customer wait time, and the number of operating trips, for each platform.

The simulation framework depicted in Figure 1 is a general framework that applies to all market structures, but variations are made for specific market structures. At each decision time interval, customer requests are collected, and the vehicle status is updated for each platform, including availability and location. Then, a trading stage is conducted following the trading mechanism for each proposed market structure. For the cooperative market, there is an additional module for profit allocation. However, for the status quo market with single or multiple platforms, the trading stage is not included. Once the trading stage is completed, customer requests are redistributed and returned, allowing platforms to optimize their matching of requests with available vehicles. This process is repeated at each time iteration.

\begin{figure}[!h]
    \centering
    \includegraphics[width=\textwidth]{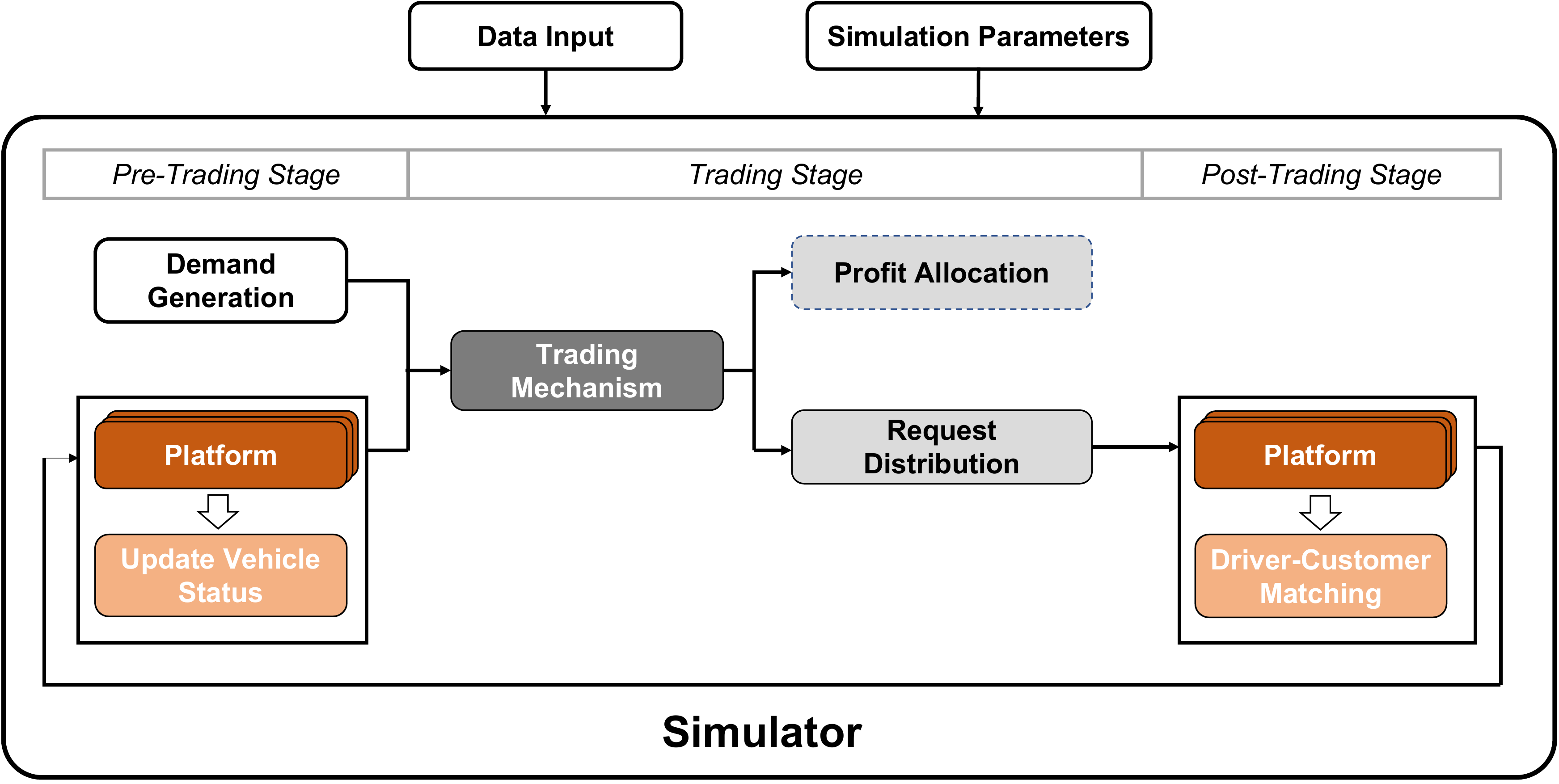}
    \caption{Simulation framework considering different market structures.}
    \label{fig:simulator}
\end{figure}

In our ride-sharing simulator, we simulated different market structures by considering either 2 or 3 platforms for each proposed market structure, and 1 to 3 platforms for the status quo market structure. The customer requests were randomly distributed among the platforms, with each platform receiving an equal number of requests. The number of drivers in the simulation were varied as 1200, 1800, 2400, and 3000. The drivers were also randomly assigned to each platform and each platform had an equal number of drivers. The initial locations of the drivers were randomly placed on the Manhattan road network.

\subsubsection{Data} 
The simulator uses various data sources. The ride-hailing demand data used in the simulation is from 7 to 10 a.m. on Wednesday, October 2, 2019 in Manhattan, obtained from a publicly available dataset that includes the specific time and regions (taxi zones) for both pick-up and drop-off for each customer request \cite{tlc_trip_record_data}. The detailed pick-up or drop-off locations were generated randomly from road nodes within each region. To calculate the travel distance for each trip, the road network for Manhattan was obtained from OpenStreetMap, which includes the length and permitted driving direction of each road segment. Additionally, travel time was estimated based on historical average traffic speed data provided by Uber Movement.

\subsubsection{Driver-Customer Matching} 
For the driver-customer matching component, we adapted the matching algorithm for on-demand ride-hailing systems proposed by \citet{Alonso-Mora2017}. This algorithm effectively formulates the trip-vehicle assignment as a constrained optimization problem that aims to minimize unsatisfied requests and promote car-sharing. Our simulation does not include the rebalancing stage for idle vehicles. The evaluation of trading requests in the shared mobility marketplace and bilateral trading market also uses the matching algorithm proposed by \citet{Alonso-Mora2017}. When an additional request is added, the matching problem is solved with the new request, and the marginal profit generated by the new request can be calculated.

\subsubsection{Simulation Parameters} 
The parameters and their values used in the ride-sharing simulator are shown in Table \ref{tab:parameter_definition}. The maximum detour factor for shared trips is 1.25, which means the shared trip is only feasible if the trip duration does not exceed 1.25 times the original trip duration. The maximum wait time for customers in the ride-hailing system is 300 seconds, and the maximum pickup time for customers when matching customers to drivers is 300 seconds. The penalty for each unsatisfied request in the matching problem is 10. The price of information rate when trading is 0.1 and the length of the time interval is 30 seconds.

The pricing scheme is based on Uber's pricing structure in NYC \cite{helling_2022,majaski_2022,driver_pay_rates_tlc}. The pricing differs between dedicated and shared trips. The trip price is calculated based on a base price plus a distance-based fare and a time-based fare, and a minimum fare is charged for each customer if the trip price is below the minimum price. It's worth noting that ride-hailing platforms use dynamic pricing mechanisms in real-world operations, but for the purpose of simplicity, a fixed pricing scheme is used in our simulations. Drivers get paid based on their travel distance and time. We assume that each platform $i \in \mathcal{P}$ in the simulator has an identical pricing scheme.

\begin{table}[h!]
    \centering
    \resizebox{0.9\textwidth}{!}{
    \begin{tabular}{ l | l | c }
     \hline 
     Parameter & Explanation & Value\\
     \hline \hline
     \multicolumn{3}{c}{\textbf{Simulator Environment}}\\
     \hline
     $\chi$ & Maximum detour factor for the shared ride & 1.25  \\
     $\Bar{\omega}_{wait}$ & Maximum wait time for customers & 300 (seconds)\\
     $\Bar{\omega}_{pickup}$ & Maximum pickup time for customers & 300 (seconds) \\
     $C$ & Penalty for each unsatisfied request & 10 \\
     $\gamma$ & Price of information rate & 0.1 \\
     $\Delta$ & Decision time interval length & 30 (seconds) \\
     \hline \hline
     \multicolumn{3}{c}{\textbf{Pricing Scheme for Platform $i \in \mathcal{P}$}}\\
     \hline 
     $\hat{p}_i$ & Base price for each dedicated trip & 2.55 (dollars)\\
     $p_i(d)$ & Dedicated trip price incurred by distance travelled & 1.75 (dollars/mile)\\
     $p_i(\tau)$ & Dedicated trip price incurred by time travelled & 0.35 (dollars/minute) \\
     $\barbelow{p}_i$ & Minimum fare for each dedicated trip & 8 (dollars) \\
     $\hat{q}_i$ & Base price for each shared trip & 1.22 (dollars)\\
     $q_i(d)$ & Shared trip price incurred by distance travelled & 0.81 (dollars/mile)\\
     $q_i(\tau)$ & Shared trip price incurred by time travelled & 0.26 (dollars/minute) \\
     $\barbelow{q}_i$ & Minimum fare for each shared trip & 7.84 (dollars)\\
     $o_i(d)$ & Driver earnings incurred by distance travelled & 1.429 (dollars/mile)\\
     $o_i(\tau)$ & Driver earnings incurred by time travelled & 0.502 (dollars/minute)\\
     \hline
     \end{tabular}}
    \caption{Simulation Parameters.}
    \label{tab:parameter_definition}
\end{table}

\subsection{Market Structure Evaluations}

\subsubsection{Overall Performance Comparisons}

To evaluate the performances of proposed market structures, we consider 11 different shared mobility markets with varying structures and platform numbers. These markets can be grouped into four categories: i) current market (status quo) with 1, 2, or 3 platforms, ii) bilateral and central trading markets with 2 or 3 platforms, iii) cooperative market with 2 or 3 platforms, and iv) shared mobility marketplace with 2 or 3 platforms. The most efficient market is represented by a single platform in the status quo, while multiple platforms in the status quo indicate the worst efficient market. The other three types of markets can decrease market inefficiencies and enhance system performance to some degree. The results for four scenarios with varying numbers of vehicles, 1200, 1800, 2400, and 3000, are presented. 

\begin{figure}[!h]
    \centering
    \includegraphics[width=\textwidth]{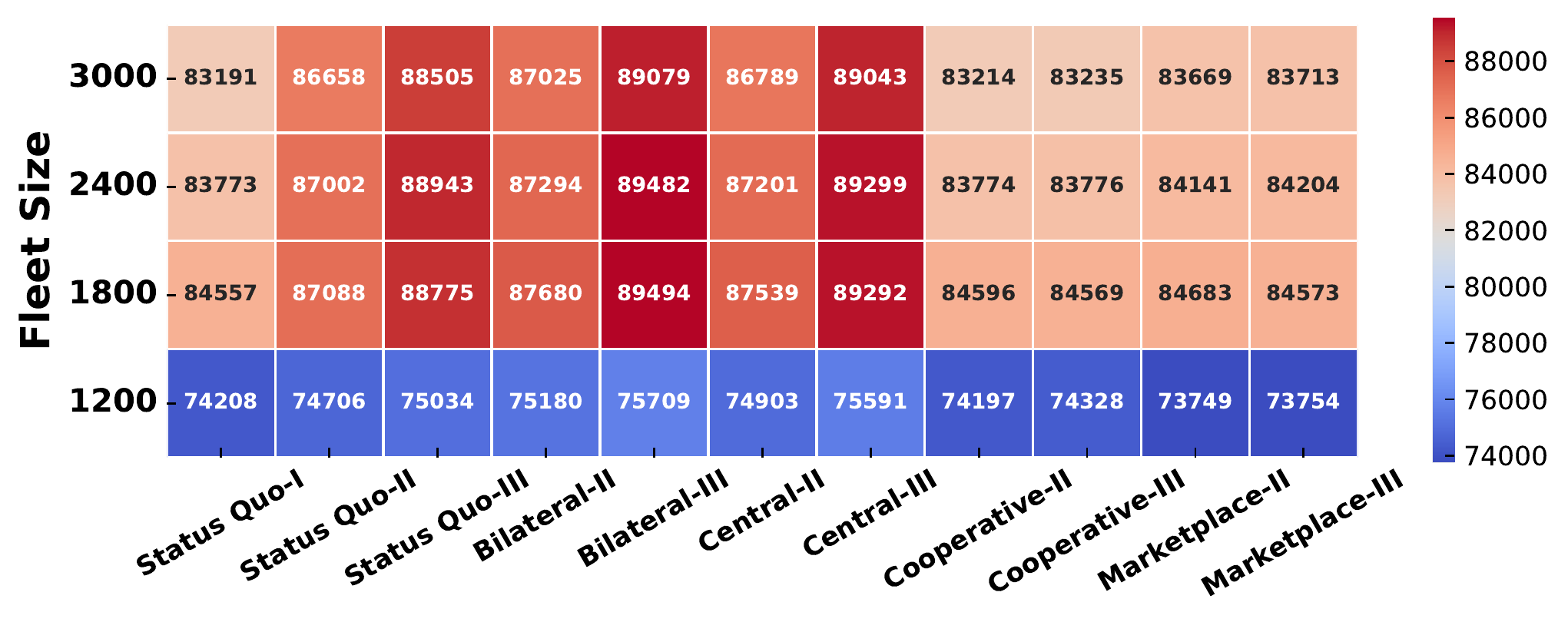}
    \caption{Total VMT for the complete shared mobility market.}
    \label{fig:total_VMT}
\end{figure}

The proposed market structures can reduce the VMT of the status quo competitive market up to 6\%. The first performance metric is the total VMT in the shared mobility market, as illustrated in Figure \ref{fig:total_VMT}. The results show that an increase in the number of vehicles leads to more total VMT as more customer requests can be satisfied, as shown in Figure \ref{fig:unsatisfied_request}. Given a scenario with a fixed fleet size, the status quo market with a single platform has the lowest total VMT, as well as the two cooperative markets. The cooperative market with three platforms and 3000 vehicles can reduce the VMT of the status quo competitive market by 6\% while satisfying 0.36\% more customers. The cooperative market achieves a similar level of efficiency as the monopolistic market while promoting healthy competition in the market. The shared mobility marketplace can also significantly decrease the total VMT from the status quo as the central broker assigns trips to platforms with the ``best'' vehicle to serve, up to 5.4\%. 

In contrast, the bilateral and central trading markets increase the total VMT for the shared mobility market. Both trading markets focus on unsatisfied requests, thus trading helps more customer requests to be served, resulting in more VMT in the system. The bilateral trading market has more total VMT than the central trading market as the bilateral market uses all vehicles and the central market only uses available (unmatched) vehicles. Therefore, more customers are satisfied with the bilateral trading market compared to the central trading market. Moreover, trading markets improve system efficiency more if the shared mobility market is more separated. 

\begin{figure}[!h]
    \centering
    \includegraphics[width=\textwidth]{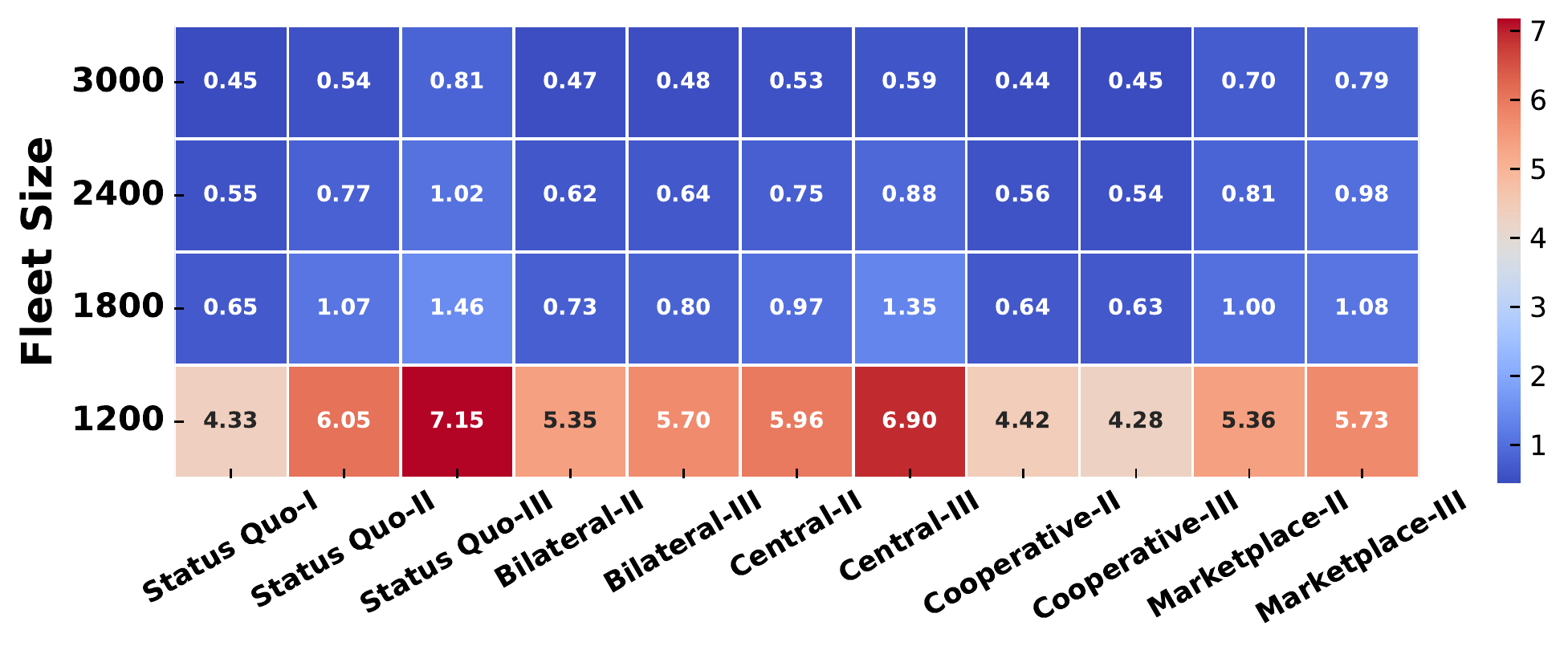}
    \caption{Percentage of unsatisfied requests for the complete shared mobility market.} 
    \label{fig:unsatisfied_request}
\end{figure}

More customers can be served by the proposed market structures with few exceptions. The second performance metric is the percentage of unsatisfied requests in the shared mobility market, as shown in Figure \ref{fig:unsatisfied_request}. It is evident that having more vehicles decreases the percentage of unsatisfied requests. In all scenarios, cooperative and trading markets are able to serve more customer requests than the divided market. The shared mobility marketplace's performance varies based on the number of vehicles and market segmentation. When there are fewer vehicles (1200 or 1800) or more vehicles (2400 or 3000) and a more segmented market (more than 3 platforms), the shared mobility marketplace is more effective at reducing the percentage of unsatisfied requests. However, when there are two platforms and enough vehicles (2400 or 3000), the shared mobility marketplace increases the percentage of unsatisfied requests. This is because the shared mobility marketplace prioritizes increasing platform profits and the standard matching process penalizes each unsatisfied request. Therefore, platforms may choose not to acquire unprofitable requests, which would have been fulfilled in the traditional market as there are no penalties for not fulfilling them.

The proposed market structures can improve the level of service for customers by reducing the average customer wait times, up to 5.4\%. The third performance metric is the average customer wait time in the shared mobility market, as illustrated in Figure \ref{fig:customer_wait_time}. The average customer wait time directly reflects the level of service for customers by a ride-sharing platform. As more vehicles are available in the market, the average customer wait time decreases. The cooperative market and the shared mobility marketplace both significantly reduce the average customer wait time, by up to 5.4\% and 3.2\%, respectively.

The shared mobility marketplace performs better than the monopolistic market when there are 1200 vehicles. This is because the central broker in the shared mobility marketplace assigns requests to platforms with the goal of maximizing profits. As a result, platforms may decline less profitable requests. This allows for vehicles to be used more efficiently in future iterations, resulting in shorter customer wait times. The benefits of declining unprofitable requests are even more pronounced in a more fragmented market. On the other hand, trading markets do not have a significant impact on the average customer wait time. The bilateral trading market has a higher likelihood of reducing wait times as all available vehicles are considered when fulfilling a traded request. However, the central trading market has a higher likelihood of increasing wait times, as requests that are not fulfilled are typically those with longer pickup times.

\begin{figure}[!h]
    \centering
    \includegraphics[width=\textwidth]{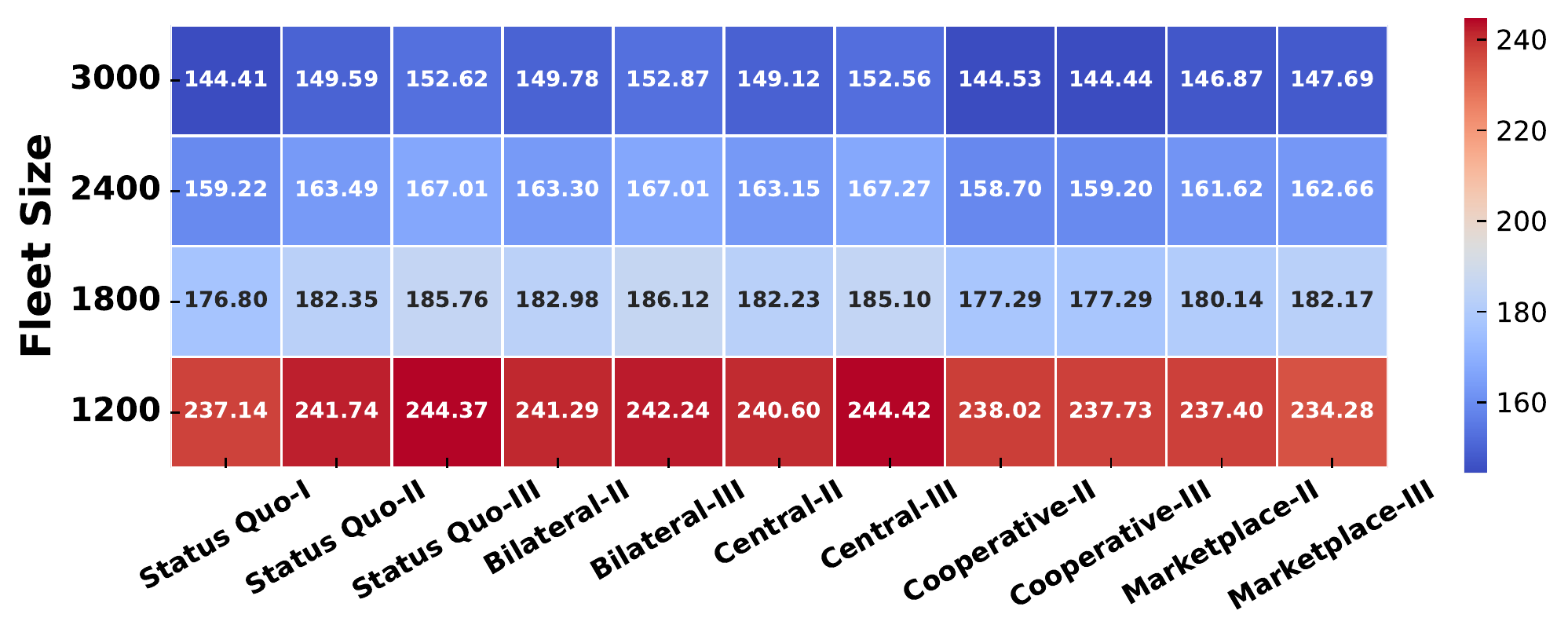}
    \caption{Average customer wait time for the complete shared mobility market.} 
    \label{fig:customer_wait_time}
\end{figure}

Customer demand can be served more efficiently with fewer trips for the proposed market structures, with up to 8.4\% fewer trips. The fourth performance measure is the total number of operating trips in a shared mobility market, as shown in Figure \ref{fig:operating_trips}. In a market that includes ride-sharing services, the number of trips taken is a good indicator of operational efficiency. An efficient ride-sharing system can meet more customer requests with fewer vehicles. The cooperative market is as efficient as a monopolistic market. The shared mobility marketplace also reduces the number of trips taken overall. In a scenario with 1800 vehicles, the number of trips taken is less than in a monopolistic market, as the shared mobility marketplace has more unsatisfied requests when they are less profitable. Both trading markets increase the number of trips taken, as more unsatisfied requests are fulfilled when platforms trade with each other.

\begin{figure}[!h]
    \centering
    \includegraphics[width=\textwidth]{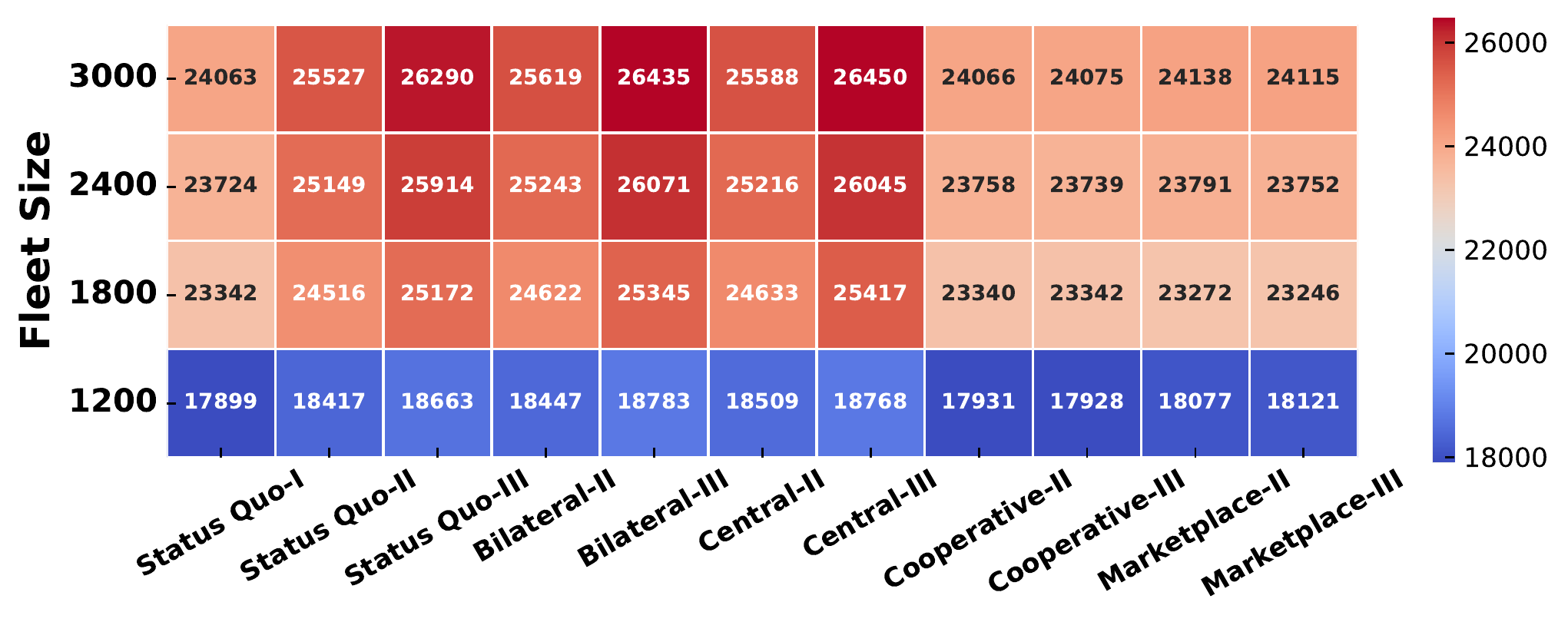}
    \caption{Total number of operating trips for the complete shared mobility market.} 
    \label{fig:operating_trips}
\end{figure}

Overall, the four market structures proposed can reduce market inefficiencies by better serving customer requests and providing benefits to platforms compared to the current segmented market. The cooperative market is the most efficient as all ride-sharing platforms work together. The shared mobility marketplace can greatly improve system performance, but may reject some less profitable customer requests. Trading markets require the least changes to the current shared mobility market. They can serve more customer requests. Meanwhile, bilateral trading markets provide additional benefits over central trading markets as platforms can leverage all available vehicles when evaluating requests from other platforms.

\subsubsection{More Evenly Distributed Profit}

The cooperative market requires a profit distribution mechanism that is fair for all platforms in the alliance. Figure \ref{fig:cooperative_market} shows the detailed analysis for the cooperative market with 2400 vehicles and 3 platforms. Profit distributions under three different distribution mechanisms are shown in Figure \ref{fig:cooperative_market_1}. Platform II receives a similar profit under all three distribution mechanisms, while Platforms I and III receive different profits. Platform III receives the most profit under the contribution-based distribution mechanism, and the least profit under the shapely value mechanism, while the opposite is true for Platform I.

For each platform, the amount of profit received should depend on the number of vehicles and the value of requests it contributes to the large alliance. Figure \ref{fig:cooperative_market_2} shows the number of contributing vehicles and the total value of contributing requests for each platform among all served trips. Platform III contributes the most vehicles, but requests with the least value. Since the values of contributed requests from the three platforms do not differ greatly, Platform III is allocated the most profit, which is further increased under the contribution-based distribution mechanism. As Platform I contributes the second highest value of requests, the EMP distribution mechanism distributes more profit to it compared to the contribution-based mechanism. The shapely value distribution mechanism is less equitable compared to the other two mechanisms as it lessens the impact of vehicle contributions for Platform III.

\begin{figure*}[!h]
    \centering
    \begin{subfigure}[b]{0.47\textwidth}
        \centering
        \includegraphics[width=\textwidth]{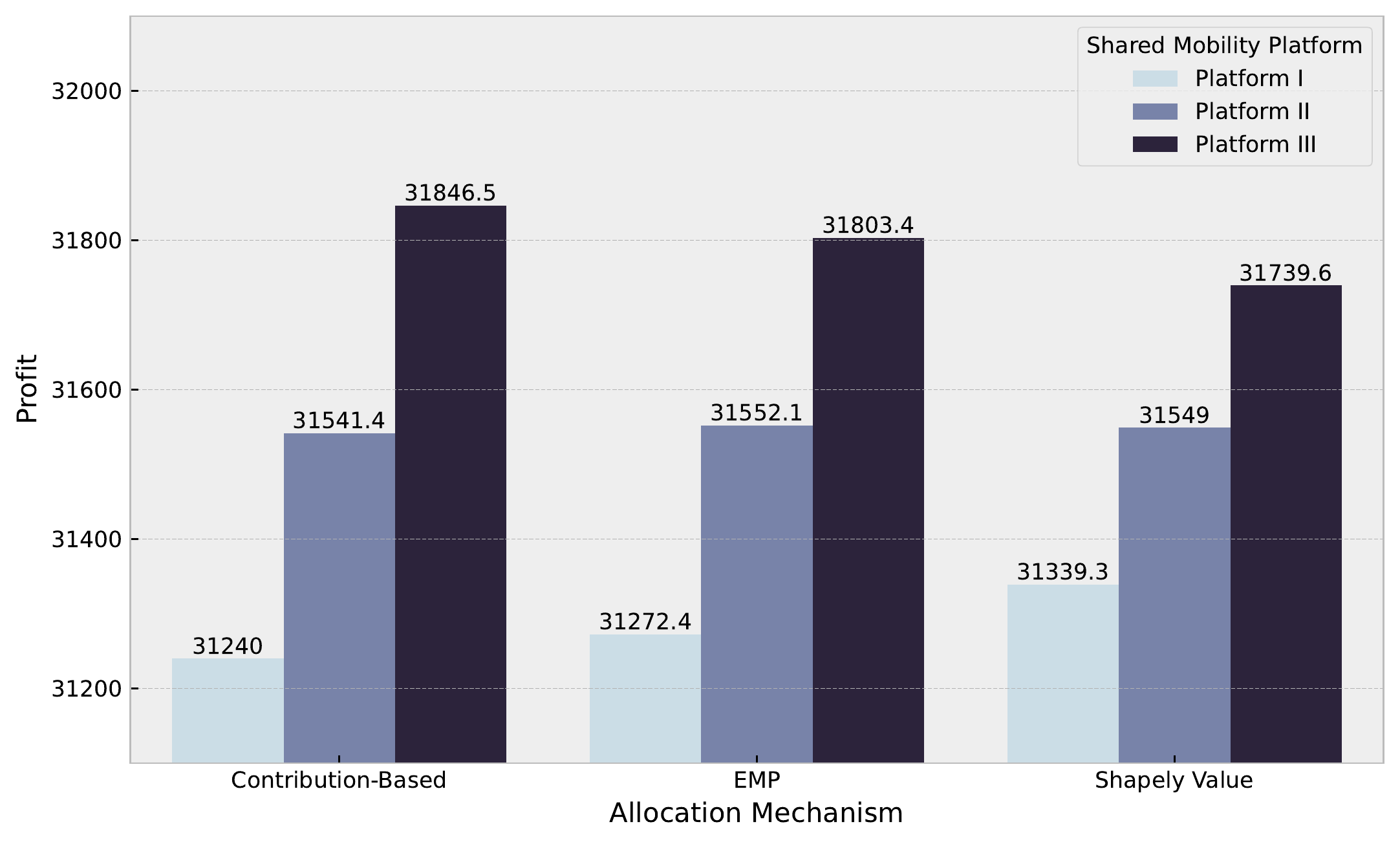}
        \caption[]%
        {Profit allocations for each profit allocation mechanism}
        \label{fig:cooperative_market_1}
    \end{subfigure}
    \hfill
    \begin{subfigure}[b]{0.51\textwidth}  
        \centering 
        \includegraphics[width=\textwidth]{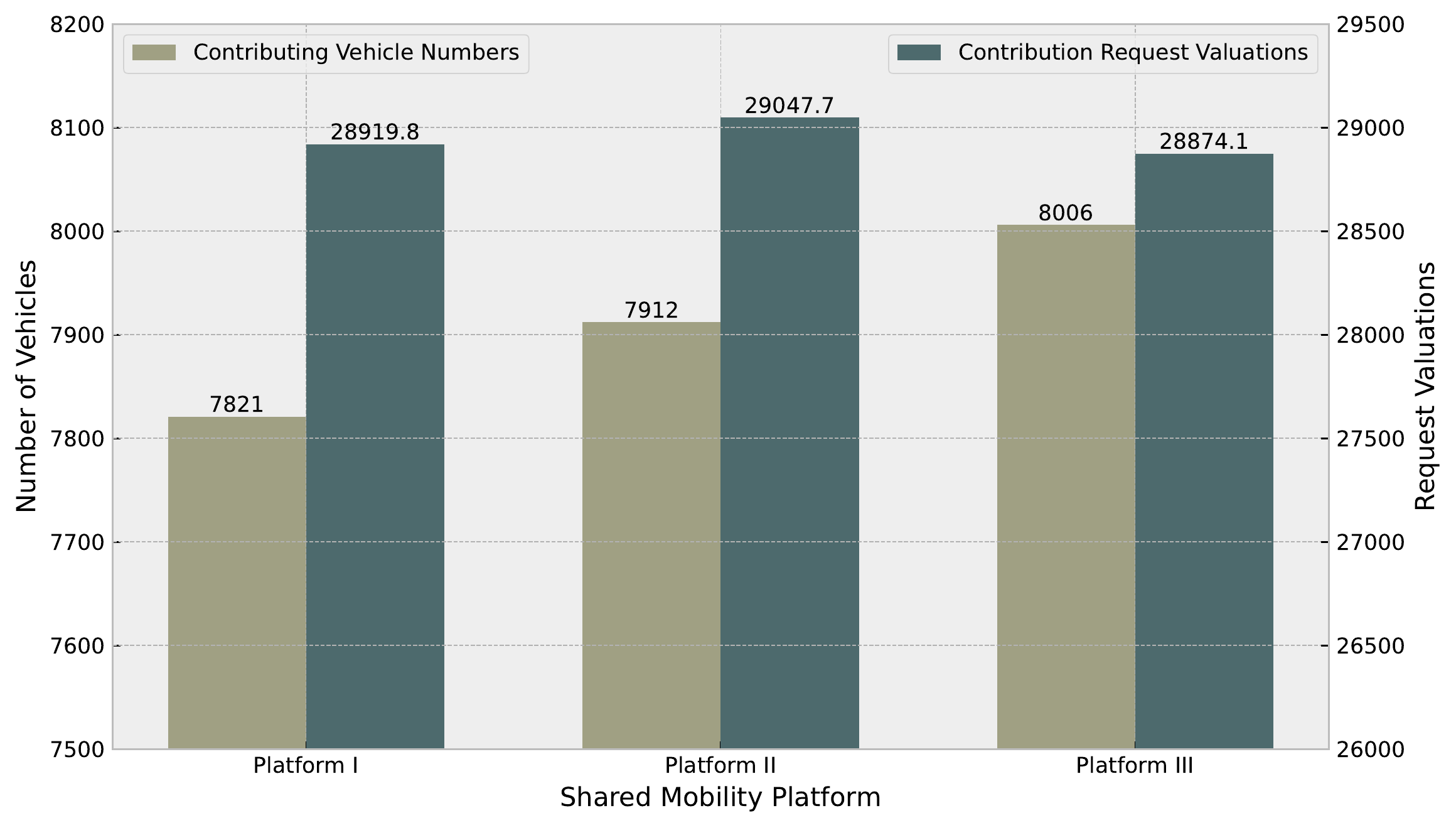}
        \caption[]%
        {Contributing vehicles and requests for each platform}    
        \label{fig:cooperative_market_2}
    \end{subfigure}
    \caption{Detailed analyses for the cooperative market with 2400 vehicles and 3 platforms.} 
    \label{fig:cooperative_market}
\end{figure*}

\subsubsection{Best Vehicles Win Customer Requests in Auction}

In the shared mobility marketplace, a central broker distributes customer requests to platforms through an auction process. Platforms bid for requests based on their perceived value and the winner pays the second-highest bid multiplied by a rate known as the price of information ($\gamma$). Figure \ref{fig:shared_mobility_marketplace} provides a detailed analysis of this marketplace structure with 2400 vehicles and 3 platforms.

\begin{figure*}[!h]
    \centering
    \begin{subfigure}[b]{0.47\textwidth}
        \centering
        \includegraphics[width=\textwidth]{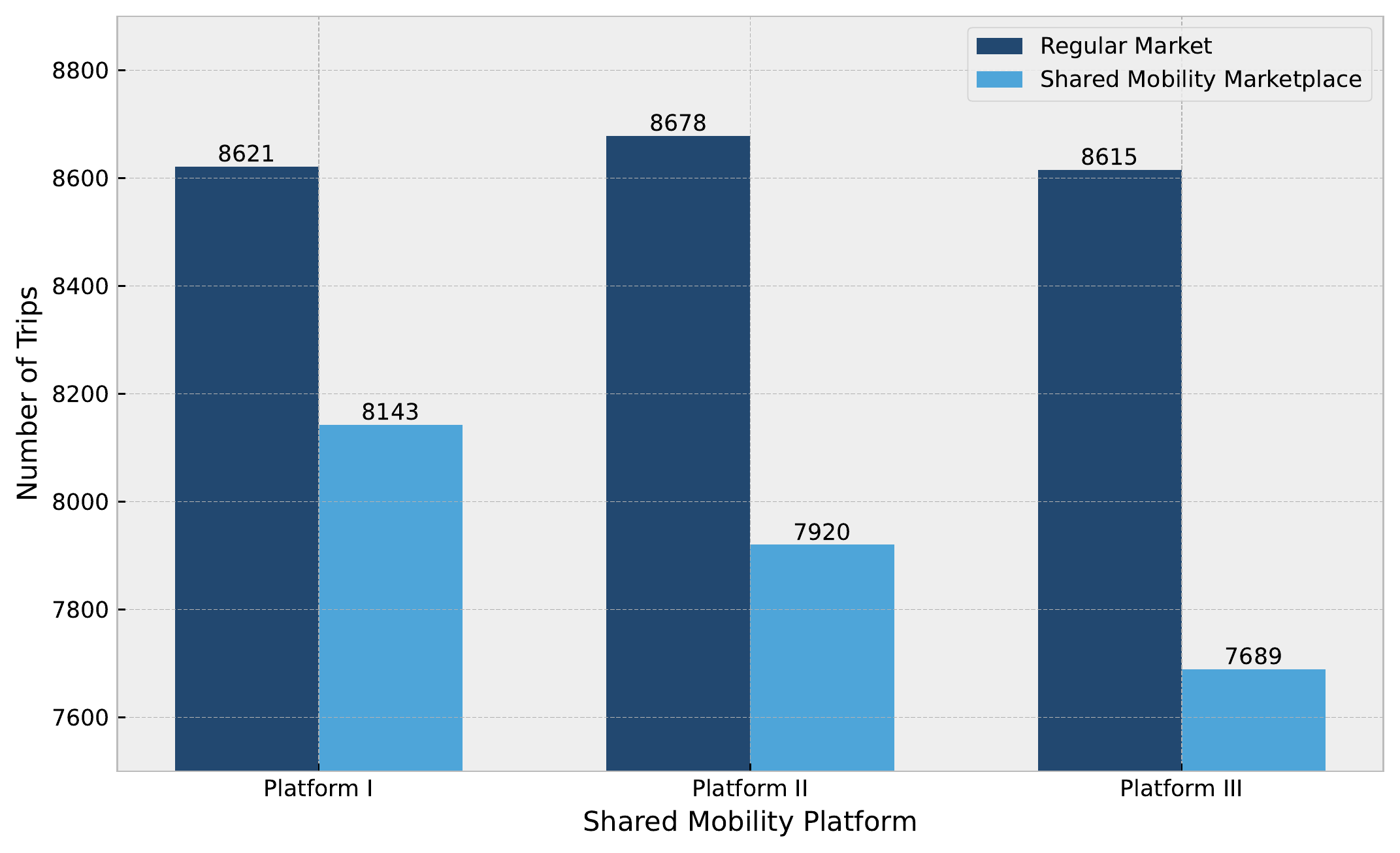}
        \caption[]%
        {Number of operating trips between status quo market and shared mobility marketplace for each platform}
        \label{fig:shared_mobility_marketplace_1}
    \end{subfigure}
    \hfill
    \begin{subfigure}[b]{0.51\textwidth}  
        \centering 
        \includegraphics[width=\textwidth]{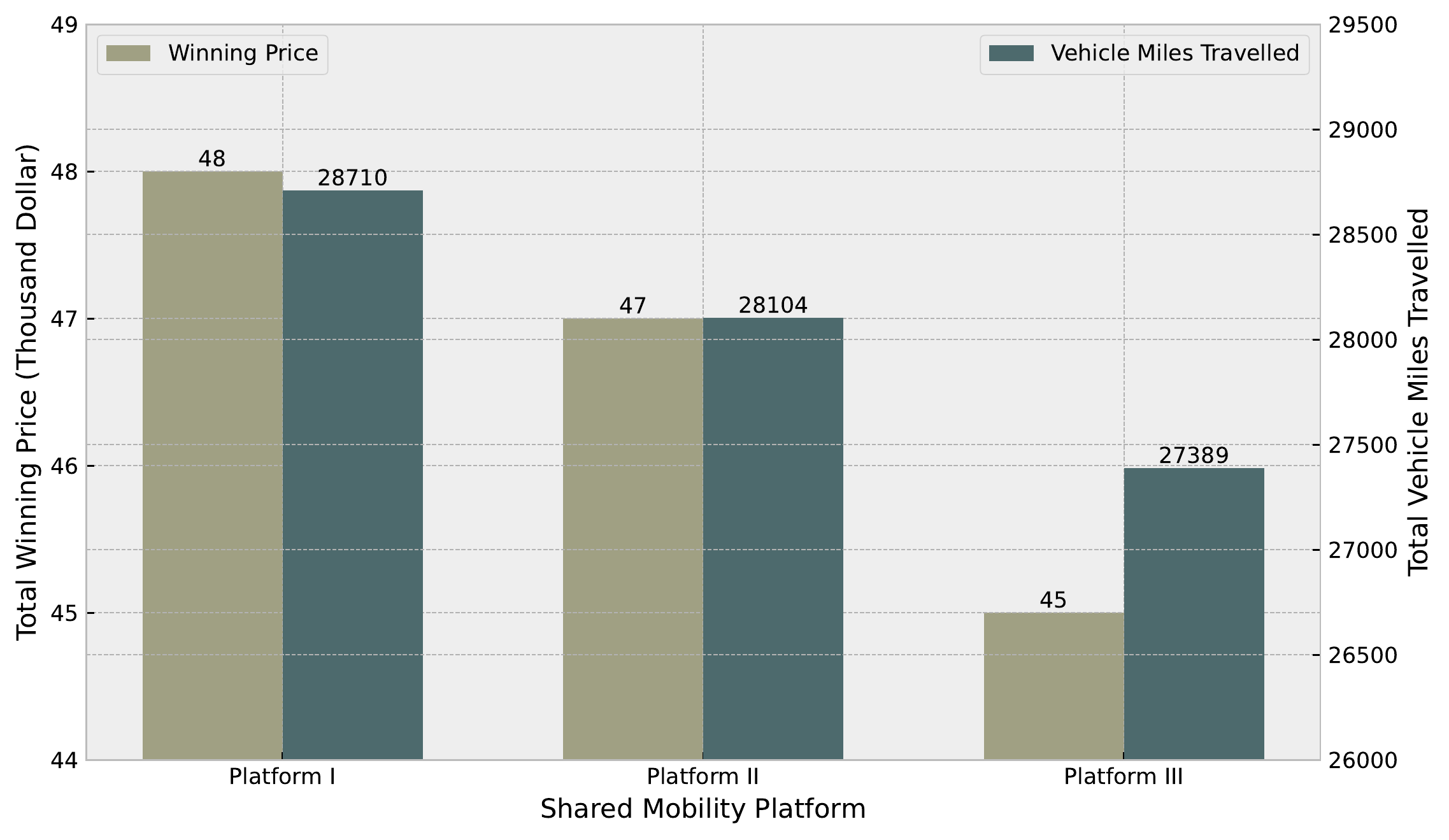}
        \caption[]%
        {Total winning prices and total VMT for each platform in the shared mobility marketplace}    
        \label{fig:shared_mobility_marketplace_2}
    \end{subfigure}
    \caption{Detailed analyses for the shared mobility marketplace with 2400 vehicles and 3 platforms.} 
    \label{fig:shared_mobility_marketplace}
\end{figure*}

The shared mobility marketplace can significantly reduce the total number of trips needed to serve all customers, and the platform with the best vehicle locations wins customer requests. Compared to the traditional segmented market, the shared mobility marketplace can significantly reduce the number of trips taken by each platform, as shown in Figure \ref{fig:shared_mobility_marketplace_1}. This is due to better utilization of vehicles and the rejection of less profitable requests. Figure \ref{fig:shared_mobility_marketplace_2} illustrates the total winning price and total VMT for each platform. Platform I takes the most trips as it pays the highest winning price and has the highest total VMT. It's worth noting that the shared mobility marketplace distributes customer requests to the platform that has the best vehicle to serve them, which can lead to uneven allocation of requests among platforms.

\subsubsection{More Tradings when Having More Demand}

In the bilateral and central trading markets, unsatisfied requests are exchanged between platforms after each matching step. As seen in Figure \ref{fig:trading_markets}, the number of trades in each iteration and the demand level, as represented by a 10-iteration rolling average, are displayed for both markets.

\begin{figure*}[!h]
    \centering
    \includegraphics[width=\textwidth]{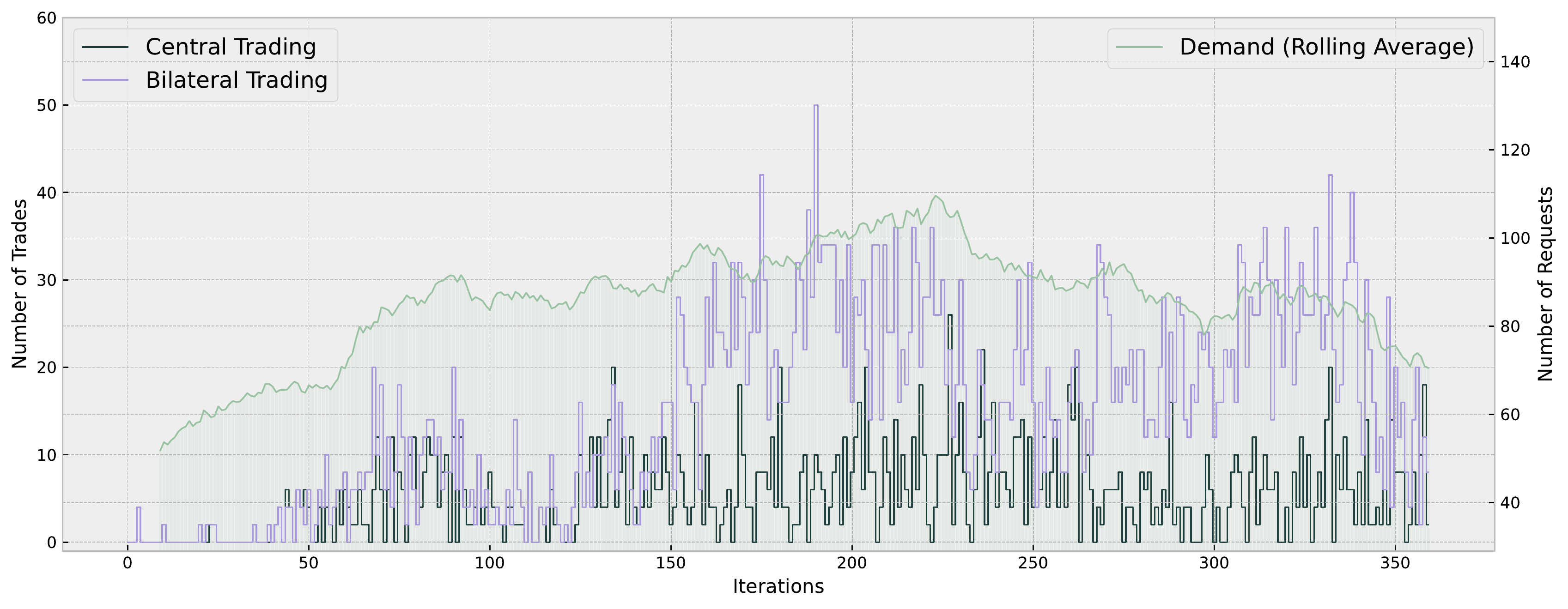}
    \caption[]{Number of trades and demand levels (10-iteration rolling average) for central and bilateral trading markets with 2400 vehicles and 3 platforms.}
    \label{fig:trading_markets}
\end{figure*}

There are more trading opportunities when having more demand. 
Initially, fewer trades occur as platforms have an abundance of vehicles. However, as demand increases, the number of trades increases. The bilateral trading market sees more trades between platforms than the central market. This is because the bilateral market makes use of all available vehicles when assessing requests from other platforms, allowing for the possibility of adding the request to an existing trip or matching it to a vehicle that is already matched to another request. This evaluation process in the bilateral market can greatly enhance the potential benefits of trading customer requests. On the other hand, the central trading market can only utilize unmatched vehicles in the trading process, leading to a waste of vehicle resources and fewer trading possibilities. 

\section{Summary and Discussion}

This paper aims to address the segmentation in shared mobility markets with multiple platforms. While a monopolistic market may have the highest system efficiency, it lacks healthy competition among platforms. To improve the efficiency of ride-sharing platforms while preserving competition, we propose four market structures utilizing a unified framework. Each proposed structure is analyzed and its market mechanisms are discussed. We use a ride-sharing simulator, incorporating real-world ride-hailing data from NYC, to evaluate the performance of the proposed market structures. With the proposed market structure, the total VMT can be reduced by up to 6\% while serving more customers. The customer wait time can be reduced by up to 5.4\% and all customers can be served by utilizing 8.4\% fewer trips.

Although the proposed market structures can bring benefits to the status quo shared mobility market, achieving them might induce some feasibility discussions. 

\textbf{Cooperative Market}: The cooperative market exists in numerous industries, including forestry transportation and logistics for instance~\cite{FRISK2010, DAI2012633}. The most critical component is a fair profit allocation mechanism. Large platforms might hesitate to join the alliance as their market shares could decline after the cooperation. The small-scale platforms have larger gains from the cooperation compared to large platforms. Meanwhile, a unified pricing scheme is required for multiple platforms, which could lead to a cartel that sets a higher price to gain monopolist-like benefits. Customers' interests can be hurt in the cooperative market. Therefore, government regulation is necessary for cooperative markets.

\textbf{Shared Mobility Marketplace}: There are several transportation-related markets implementing auction mechanisms.  In the truckload transportation auction market, retailers, manufacturers, distributors, and other companies which need to move freight are auctioneers, and the trucking companies that own the transportation assets serve as bidders~\cite{Cramton2006}. For the bus routes market in the Greater London area, the London Regional Transport (LRT), which is succeeded by Transport for London (TfL), acts as an auctioneer and sells rights for carrying out bus services to private operators~\cite{Cramton2006}. These instances suggest possibilities for introducing auction mechanisms in the shared mobility market.

For the shared mobility marketplace, platforms are significantly affected due to the loss of control for demand information. Large platforms lose advantages compared to small platforms, and they are unlikely to join the shared mobility marketplace unless receiving external interventions. 
Meanwhile, the central broker in this market requires technological competencies for gathering enormous demand information and distributing demand information rapidly to maintain a satisfying service for customers, which is required by the on-demand nature of shared mobility services. The central broker needs to be regulated because of its power of setting the pricing scheme and collecting all customers' information. 

\textbf{Central Trading Market}: The central trading market is a common type of market in the real world, such as stock and rental markets. The key feature of a central trading market is that it enables all participants to remain anonymous, but the only relevant factor is the price. In a stock exchange, for example, buyers and sellers do not care about who they are buying from or selling to, as long as the price meets their requirements. Both large and small companies benefit from this market without worrying about a decline in their market share. For large platforms, owning more customer or driver information enhances their earning potential by selling it to other platforms. A central broker in this market needs technical competencies to maintain a trading platform for exchanging demand information with platforms.

\textbf{Bilateral Trading Market}: In most real-world markets, there are multiple participants, which can make bilateral trading less efficient than central trading due to the lack of information sharing. However, the bilateral trading market requires minimal intervention in the current shared mobility market. In this market, platforms increase their profits by trading request information. Similar to the central trading market, large platforms do not have to worry about losing market share and can benefit from trading information.

In this paper, market efficiency is measured using metrics such as total VMT, the number of operating trips, the percentage of unsatisfied requests, and average customer wait times. It is acknowledged that efficiency can be evaluated using a more comprehensive set of objectives such as total travel time, total carbon emissions, total energy consumption, etc, which are worthy of further research. Additionally, the equity of platforms, customers, and drivers in shared mobility markets, and the redistribution of surplus from improved market efficiency, merit further discussion in future research.

Furthermore, the impact of multi-homing drivers and customers, who form a significant portion of the shared mobility market, has not been considered in the design of different markets. Their role in improving the efficiency of segmented shared mobility markets should be more accurately assessed.

Lastly, simple market mechanisms have been proposed for each market structure. More complex mechanisms, such as a combinatorial auction mechanism, can also be proposed for the shared mobility marketplace.

In conclusion, market design in the shared mobility domain can be a powerful tool to reduce inefficiency caused by the segmentation of different platforms and to promote healthy competition through collaboration. We hope that this paper serves as a starting point for further research to quantify efficiency improvements and provide more detailed mechanism designs. Our work is able to provide insights for TNC operators, transportation authorities, transportation engineers, and urban planners.

\section*{Acknowledgements}
\noindent This research project is supported by the Singapore--MIT Alliance for Research and Technology Centre (SMART), and by the Institute of Public Governance, Peking University (General Project YBXM202114). The authors thank Nicholas S. Caros and Shenhao Wang for their comments on the paper. Data retrieved from Uber Movement, (c) 2023 Uber Technologies, Inc., https://movement.uber.com.

\section*{Author Contribution Statement}
The authors confirm their contribution to the paper as follows: study conception and design: X. Guo, H. Zhang, P. Noursalehi, J. Zhao; data collection: X. Guo, A. Qu, H. Zhang; analysis and interpretation of results: X. Guo, A. Qu, H. Zhang, J. Zhao; draft manuscript preparation: X. Guo, A. Qu, H. Zhang, J. Zhao. All of the authors reviewed the results and approved the final version of the manuscript. The authors do not have any conflicts of interest to declare.

\bibliographystyle{model1-num-names}
\bibliography{reference.bib}

\newpage
\appendix


\section{Detailed Mechanisms for Cooperative Market}
\label{appen:cooperative}


\subsection{Equal Profit Method}


The profit allocation of EPM is solved by the following LP:

\begin{subequations}
\begin{align}
\min \quad
& \alpha \\
\text{s.t.} \quad
& \alpha \geq \frac{x_i}{v(\{i\})} - \frac{x_j}{v(\{j\})} \quad \forall i,j \in N\\
& \sum_{i \in S} x_i \geq v(S) \quad \forall S \subset N \\
& \sum_{i \in N} x_i = v(N) \\
& x_i \geq 0 \quad \forall i \in N
\end{align}
\end{subequations}
\\
The objective function (1a) minimizes the largest difference in relative profit between any two platforms in the grand coalition $N$.
Constraints (1c) and (1d) guarantee that the profit allocation is at the core of the cooperative game $(N, v)$.
Constraints (1e) ensure the non-negativity of the profit allocation for platforms.

The EPM allocation mechanism ensures the final profit allocation is in the core and it is equitable for platforms regarding relative profit gain after joining the alliance.
However, problems (1a) - (1e) can be infeasible since the core of the cooperative game can be empty.

\subsection{Contribution-Based Allocation Mechanism}

The profit allocation of the contribution-based allocation mechanism is solved by the following LP:

\begin{subequations}
\begin{align}
\min \quad
& \beta \\
\text{s.t.} \quad
& \beta \geq \frac{x_i - v(\{i\})}{w_i} - \frac{x_j - v(\{j\})}{w_j} \quad \forall i,j \in N\\
& \sum_{i \in S} x_i \geq v(S) \quad \forall S \subset N \\
& \sum_{i \in N} x_i = v(N) \\
& x_i \geq 0 \quad \forall i \in N
\end{align}
\end{subequations}
\\
The objective function (2a) finds a profit allocation that minimizes the difference in profit-contribution ratio between any two platforms.
Constraints (2c) and (2d) assure that the profit allocation is at the core.
Constraints (2e) ensure the non-negativity property.

The contribution-based allocation mechanism is guaranteed to be in the core of the cooperative game $(N, v)$, but it does not guarantee the existence of the allocation since problems (2a) - (2e) can be infeasible.

\end{document}